\theoremstyle{definition}
\date{}
\newtheorem{theo}{Theorem}[section]
\newtheorem{defi}[theo]{Definition}
\newtheorem{lemm}[theo]{Lemma}
\newtheorem{prop}[theo]{Proposition}
\newtheorem{rem}[theo]{Remark}
\newtheorem*{assum}{Assumption}
\newcommand{\relmiddle}[1]{\mathrel{}\middle#1\mathrel{}}
\newcommand{\1}{\mbox{1}\hspace{-0.25em}\mbox{l}}
\newcommand{\esssup}{\underset{s\in[0,T]}{\text{ess\,sup}\,}}
\providecommand{\keywords}[1]{\textbf{Keywords:} #1}
\def\widebar{\accentset{{\cc@style\underline{\mskip10mu}}}}
\numberwithin{equation}{section}
\def\rnum#1{\expandafter{\romannumeral #1}} 
\def\Rnum#1{\uppercase\expandafter{\romannumeral #1}}
\title{Small-time solvability of a flow of forward-backward stochastic differential equations}
\author{Yushi Hamaguchi\thanks{Department of Mathematics, Kyoto University, Kyoto 606--8502, Japan, \href{mailto:hamaguchi@math.kyoto-u.ac.jp}{hamaguchi@math.kyoto-u.ac.jp}}}
\begin{document}
\maketitle

%%%%%%%%%%%%%%%%%%%%%%%%%%%%%%%%%%%%%%%%%%%%%%%%%%%%%%%%%%%%%%%%%%%%%%%%%%%%%%%%%%%%%%%%%%%%%%
%%%% Abstract
%%%%%%%%%%%%%%%%%%%%%%%%%%%%%%%%%%%%%%%%%%%%%%%%%%%%%%%%%%%%%%%%%%%%%%%%%%%%%%%%%%%%%%%%%%%%%%

\begin{abstract}
Motivated from time-inconsistent stochastic control problems, we introduce a new type of coupled forward-backward stochastic systems, namely, flows of forward-backward stochastic differential equations. They are coupled systems consisting of a single forward SDE and a continuum of BSDEs, which are defined on different time-intervals. We formulate a notion of equilibrium solutions in a general framework and prove small-time well-posedness of the equations. We also consider discretized flows and show that their equilibrium solutions approximate the original one, together with an estimate of the convergence rate.
\end{abstract}

\keywords{Flow of forward-backward stochastic differential equations; equilibrium solution; time-inconsistency; stochastic control; backward stochastic Volterra integral equation.}

%%%%%%%%%%%%%%%%%%%%%%%%%%%%%%%%%%%%%%%%%%%%%%%%%%%%%%%%%%%%%%%%%%%%%%%%%%%%%%%%%%%%%%%%%%%%%%
%%%% Section. 1
%%%%%%%%%%%%%%%%%%%%%%%%%%%%%%%%%%%%%%%%%%%%%%%%%%%%%%%%%%%%%%%%%%%%%%%%%%%%%%%%%%%%%%%%%%%%%%

\section{Introduction}

\paragraph{}
\ \,In this paper, we introduce a new type of coupled forward-backward stochastic systems, namely, flows of forward-backward stochastic differential equations. They are coupled systems consisting of a single forward stochastic differential equation (SDE) and a continuum of backward stochastic differential equations (BSDEs), which are defined on different time-intervals. The solution, which we call the equilibrium solution, of a flow of forward-backward SDEs consists of a family of processes $\left(X,\{Y^t,Z^t\}_{t\in[0,T]}\right)$ where $X$ is an adapted process defined on an interval $[0,T]$, $\left(Y^t,Z^t\right)$ is a pair of adapted processes defined on $[t,T]$ for each $t\in[0,T]$, and they satisfy the following system in the It\^{o} sense:
\begin{equation}\label{flow of FBSDE}
\begin{cases}
	dX_s=B(s,X_s,Y^s_s)\,ds+\Sigma(s,X_s,Y^s_s)\,dW_s,\ s\in[0,T],\\
	dY^t_s=-F(t,s,X_t,\mathbb{E}_t[X_s],X_s,Y^s_s,Y^t_s,Z^t_s)\,ds+Z^t_s\,dW_s,\ s\in[t,T],\\
	X_0=x,\ Y^t_T=G(t,X_t,\mathbb{E}_t[X_T],X_T),\ t\in[0,T].
\end{cases}
\end{equation}
Here, $\mathbb{E}_t[\cdot]$ denotes the conditional expectation given $\mathcal{F}_t$; $B$, $\Sigma$, $F$ and $G$ are given random functions and $x$ is a given initial condition for $X$. The first line of (\ref{flow of FBSDE}) is an SDE with the initial condition $X_0=x$ which determines the time evolution of the process $X$ and, for each $t\in[0,T]$, the second line is a BSDE defined on $[t,T]$ with the terminal condition $Y^t_T=G(t,X_t,\mathbb{E}_t[X_T],X_T)$ which determines the pair of processes $\left(Y^t,Z^t\right)$. All the above equations are coupled via the ``diagonal term'' $Y^s_s$ of the random field $Y^t_s;\ 0\leq t\leq s\leq T$. The arguments of $F$ and $G$ in the BSDEs depend on the current time $t$, the current state $X_t$, and the conditional expectation $\mathbb{E}_t[X_s]$ or $\mathbb{E}_t[X_T]$. Thus, the system (\ref{flow of FBSDE}) is regarded as a generalization of classical forward-backward SDEs to a time-inconsistent setting.
\par
This type of systems of equations appear in time-inconsistent stochastic control problems and characterize their (open-loop) equilibrium controls. Time-inconsistent control problems are recently studied by Ekeland and Lazrak~\cite{a_Ekeland-Lazrak_10}, Yong~\cite{a_Yong_12}, Bj\"{o}rk, Murgoci and Zhou~\cite{a_Bjork-_14}, Hu, Jin and Zhou~\cite{a_Hu-12,a_Hu-17}, Djehiche and Huang~\cite{a_Djehiche-Huang_16}, Bj\"{o}rk, Khapko and Murgoci~\cite{a_Bjork-_17}, Wei, Yong and Yu~\cite{a_Wei_17}, Yong~\cite{a_Yong_17}, Yan and Yong~\cite{I_Yan-Yong_19}, among others. Time-inconsistency occurs for example when the player's time preference is described by a non-exponential discount function such as the hyperbolic discounting, or when the cost functional is a nonlinear function of (conditional) expectation of a state process such as dynamic mean-variance control problems. Unlike classical control problems the so-called Bellman's principle does not hold in these cases. In other words, a strategy which is optimal at a given starting point is no longer optimal when viewed from a later date and different state. Thus, we have to reconsider the concept of ``optimality''. Hu, Jin and Zhou~\cite{a_Hu-12,a_Hu-17} introduced an alternative concept of optimality in time-inconsistent problems, namely, an (open-loop) equilibrium control, which is time-consistent. We overview its definition and connection to flows of forward-backward SDEs by informal arguments below. For a detailed discussion of equilibrium controls for time-inconsistent stochastic control problems, see \cite{a_Hu-12,a_Hu-17,I_Yan-Yong_19,a_Yong_17}.
\par
Let $\mathcal{U}[t,T]$ be a set of control processes on $[t,T]$ taking values in a Borel subset $U$ of a Euclidean space. For each $u\in\mathcal{U}[t,T]$, let $x^u$ be the corresponding controlled state process defined as the unique solution of the SDE
\begin{equation*}
\begin{cases}
	dx^u_s=b(s,x^u_s,u_s)\,ds+\sigma(s,x^u_s)\,dW_s,\ s\in[t,T],\\
	x^u_t=x_t,
\end{cases}
\end{equation*}
where $(t,x_t)\in[0,T]\times\mathbb{R}$ is an initial pair. Define the player's cost functional that is viewed at the initial pair $(t,x_t)$ by
\begin{equation}\label{intro cost}
	J(t,x_t;u):=\mathbb{E}_t\left[\int^T_tf(t,x_t,s,x^u_s,u_s)\,ds+g(t,x_t,x^u_T)\right]+h(t,x^u_t,\mathbb{E}_t[x^u_T]).
\end{equation}
Here, we assume that all given functions $b,\ \sigma,\ f,\ g,\ h$ are deterministic, one-dimensional and sufficiently smooth for simplicity. In the following sections, we consider multi-dimensional and random coefficients.
\par
The player's objective is to search for an ``optimal'' strategy through the time-interval $[t,T]$. This problem is time-inconsistent since (\rnum{1}) the cost functional depends on the current time and state $(t,x_t)$, and (\rnum{2}) the second term of the right hand side of (\ref{intro cost}) is a (nonlinear) function of the conditional expectation of the terminal state. For an initial state $x_0\in\mathbb{R}$ at time $0$, we call $\hat{u}\in\mathcal{U}[0,T]$ an (open-loop) equilibrium control with respect to $x_0$ and $\hat{x}:=x^{\hat{u}}$ the corresponding equilibrium state process if it satisfies
\begin{equation*}
	\liminf_{\epsilon\downarrow0}\frac{J(t,\hat{x}_t;u^{t,\epsilon,v})-J(t,\hat{x}_t;\hat{u})}{\epsilon}\geq0\ \text{a.s.}
\end{equation*}
for any $t\in[0,T)$ and any square-integrable $\mathcal{F}_t$-measurable random variable $v$ taking values in $U$, where $u^{t,\epsilon,v}$ is the ``spike variation'' of $\hat{u}$ at time $t$ with respect to $v$, namely, $u^{t,\epsilon,v}_s:=v$ if $s\in[t,t+\epsilon)$ and $u^{t,\epsilon,v}_s:=\hat{u}_s$ otherwise. Then, by a version of the stochastic maximum principle (see \cite{I_Yan-Yong_19}), an equilibrium control $\hat{u}$ is characterized by the relation
\begin{equation*}
	H(t,\hat{x}_t,t,\hat{x}_t,\hat{u}_t,p^t_t,q^t_t)\leq H(t,\hat{x}_t,t,\hat{x}_t,v,p^t_t,q^t_t)
\end{equation*}
for any $t\in[0,T)$ and $v\in U$. Here, the function $H$ is the Hamiltonian defined by
\begin{equation*}
	H(t,\xi,s,x,u,p,q):=b(s,x,u)p+\sigma(s,x)q+f(t,\xi,s,x,u)
\end{equation*}
for $0\leq t\leq s\leq T$ and $\xi,x,p,q\in\mathbb{R}$; for each $t\in[0,T]$, $\left(p^t,q^t\right)=\left(p^t_s,q^t_s\right)_{s\in[t,T]}$ is the solution of the corresponding (first-order) adjoint equation, namely, the BSDE
\begin{equation*}
\begin{cases}
	dp^t_s=-\partial_xH(t,\hat{x}_t,s,\hat{x}_s,\hat{u}_s,p^t_s,q^t_s)\,ds+q^t_s\,dW_s,\ s\in[t,T],\\
	p^t_T=\partial_x g(t,\hat{x}_t,\hat{x}_T)+\partial_{\bar{x}}h(t,\hat{x}_t,\mathbb{E}_t[\hat{x}_T]),
\end{cases}
\end{equation*}
where $\partial_xH$ and $\partial_xg$ are the partial derivatives of $H$ and $g$ with respect to the $x$-variables (the fourth variable of $H$ and the third variable of $g$, respectively) and $\partial_{\bar{x}}h$ is the partial derivative of $h$ with respect to the $\bar{x}$-variable (the third variable of $h$).
\par
If the function $U\ni u\mapsto H(t,x,t,x,u,p,q)\in\mathbb{R}$ has a unique minimizer $\hat{u}(t,x,p)$ for each $t\in[0,T]$ and $x,p,q\in\mathbb{R}$, which is independent of $q$ since the volatility $\sigma$ is uncontrolled in this case, and $\hat{u}(t,x,p)$ satisfies an appropriate regularity condition, then the equilibrium control is characterized (at least formally) by $\hat{u}_t=\hat{u}(t,\hat{x}_t,p^t_t)$, $t\in[0,T]$, where the adapted processes $(\hat{x}_t,p^t_t),\ t\in[0,T]$, satisfy the following system:
\begin{equation*}
\begin{cases}
	d\hat{x}_s=b(s,\hat{x}_s,\hat{u}(s,\hat{x}_s,p^s_s))\,ds+\sigma(s,\hat{x}_s)\,dW_s,\ s\in[0,T],\\
	dp^t_s=-\partial_xH(t,\hat{x}_t,s,\hat{x}_s,\hat{u}(s,\hat{x}_s,p^s_s),p^t_s,q^t_s)\,ds+q^t_s\,dW_s,\ s\in[t,T],\\
	\hat{x}_0=x_0,\ p^t_T=\partial_x g(t,\hat{x}_t,\hat{x}_T)+\partial_{\bar{x}}h(t,\hat{x}_t,\mathbb{E}_t[\hat{x}_T]),\ t\in[0,T].
\end{cases}
\end{equation*}
This system is a special case of (\ref{flow of FBSDE}). We see that if we can solve this system, then we can construct the open-loop equilibrium control by using the equilibrium solution. In this paper, we investigate its solvability in a more general setting.
\par
Although characterizations of equilibrium controls by flows of forward-backward SDEs have been suggested in some papers, there are only a few studies about solvability of the equations. Hu, Jin and Zhou~\cite{a_Hu-12,a_Hu-17} studied linear-quadratic time-inconsistent stochastic control problems. They derived a flow of affine forward-backward SDEs with random coefficients characterizing the equilibrium control and solved it by using Riccati-like equations only when the state is one-dimensional and all the coefficients are deterministic. Djehiche and Huang~\cite{a_Djehiche-Huang_16} studied time-inconsistent mean-field stochastic control problems and derived a flow of forward-backward SDEs, while their models are assumed to be deterministic and solvability of the equations were not discussed.
\par
In this paper, in contrast to the above-mentioned papers, we investigate a flow of forward-backward SDEs with general and random coefficients and solve it by using a contraction mapping argument when the time-interval is sufficiently small. This is our first contribution. Furthermore, we introduce a discretized equilibrium solution which is a discretized version of the concept of equilibrium solutions and show that the discretized equilibrium solutions approximate the original one. In the case that $F$ is independent of $(X_t,\mathbb{E}_t[X_s])$ and $G$ is independent of $(X_t,\mathbb{E}_t[X_T])$, the discretized equilibrium solutions are written as adapted solutions of classical forward-backward SDEs, which have been analyzed in many papers; see the textbook \cite{b_Ma-Yong_99}. Thus our approximation result reveals that, in a special case, the equilibrium solution of the flow of forward-backward SDEs (\ref{flow of FBSDE}) is approximated by adapted solutions of the corresponding classical forward-backward SDEs. This type of approximation result is new and this is the second contribution of this paper. We hope that our approximation results provide a new insight for investigating a flow of forward-backward SDEs. On the other hand, the system (\ref{flow of FBSDE}) implicitly assume that the diagonal term $(Y^t_t)_{t\in[0,T]}$ is not only adapted but also progressively measurable. However, the progressive measurability of the diagonal term is not clear since for each $t\in[0,T]$ the random variable $Y^t_t$ derives from different BSDEs. We rigorously prove that, under natural assumptions, for given adapted processes $X$ and $\mathcal{Y}$ the diagonal term $(Y^t_t)_{t\in[0,T]}$ which derives from the BSDEs parametrized by $t\in[0,T]$:
\begin{equation*}
\begin{cases}
		dY^t_s=-F(t,s,X_t,\mathbb{E}_t[X_s],X_s,\mathcal{Y}_s,Y^t_s,Z^t_s)\,ds+Z^t_s\,dW_s,\ s\in[t,T],\\
		Y^t_T=G(t,X_t,\mathbb{E}_t[X_T],X_T),
\end{cases}
\end{equation*}
has a progressively measurable version; see Lemma \ref{measurability lemma}. This is an additional contribution of this paper.
\par
The system (\ref{flow of FBSDE}) is also regarded as a generalization of backward stochastic Volterra integral equations (BSVIEs) that were introduced by Lin~\cite{a_Lin_02} and studied by Yong~\cite{a_Yong_06}, Shi and Wang~\cite{a_Shi-Wang_12}, Shi, Wang and Yong~\cite{a_Shi-Wang-Yong_15}, Wang and Zhang~\cite{a_Wang-Zhang_17}, Wang and Yong~\cite{a_Wang-Yong_19}, Wang~\cite{a_Wang_19}, among others. Indeed, if the coefficients $F$ and $G$ are independent of $X$, the second line of (\ref{flow of FBSDE}) with the terminal condition $G(t)$ becomes an extended BSVIE (see \cite{a_Wang_19}) for the processes $\{(Y^t_s,Z^t_s)$; $0\leq t\leq s\leq T\}$, of the following form:
\begin{equation*}
	Y^t_s=G(t)+\int^T_sF(t,u,Y^u_u,Y^t_u,Z^t_u)\,du-\int^T_sZ^t_u\,dW_u,\ 0\leq t\leq s\leq T.
\end{equation*}
Thus, the flow of forward-backward SDEs (\ref{flow of FBSDE}) can be regarded as a fully coupled system consisting of an SDE and an extended BSVIE. We remark on this matter in Section~\ref{section remark}.
\par
Our paper is organized as follows: In Section~\ref{section FFBSDE}, we state the notations and prove the small-time solvability and stability estimates for the system (\ref{flow of FBSDE}). In Section~\ref{section approximation}, we introduce discretized equilibrium solutions and show an approximation result. Lastly, we provide some remarks and future problems in Section~\ref{section remark}.

%%%%%%%%%%%%%%%%%%%%%%%%%%%%%%%%%%%%%%%%%%%%%%%%%%%%%%%%%%%%%%%%%%%%%%%%%%%%%%%%%%%%%%%%%%%%%%
%%%% Section. 2
%%%%%%%%%%%%%%%%%%%%%%%%%%%%%%%%%%%%%%%%%%%%%%%%%%%%%%%%%%%%%%%%%%%%%%%%%%%%%%%%%%%%%%%%%%%%%%

\section{A flow of forward-backward SDEs}\label{section FFBSDE}

%%%% Subsection 

\subsection{Notations}\label{subsection notation}

\paragraph{}
\ \,In this subsection, we summarize the notations we use throughout the paper.
\par
$W=(W_t)_{t\geq0}$ is a $d$-dimensional Brownian motion defined on a probability space $(\Omega,\mathcal{F},\mathbb{P})$ and $\mathbb{F}=(\mathcal{F}_t)_{t\geq0}$ is the $\mathbb{P}$-augmentation of the filtration generated by $W$. We sometimes omit the dependency of $\omega$. For a set $A$, we denote by $\1_A$ the indicator function of $A$. $\text{Leb}$ denotes the Lebesgue measure on an interval. $\mathbb{E}_t[\cdot]:=\mathbb{E}[\cdot|\mathcal{F}_t]$ denotes the conditional expectation given $\mathcal{F}_t$. For $T>0$, we define $\Delta[0,T]:=\{(t,s)|0\leq t\leq s\leq T\}$.
\par
For $0\leq t\leq T<\infty$ and $\mathbb{H}=\mathbb{R}^n,\mathbb{R}^m$, etc., we define
\begin{align*}
	&L^2_{\mathcal{F}_t}(\Omega;\mathbb{H}):=\left\{\chi\relmiddle|
		\begin{aligned}
		&\chi\ \text{is}\ \mathbb{H}\text{-valued,}\ \mathcal{F}_t\text{-measurable}\\
		&\text{and satisfies}\ \mathbb{E}[|\chi|^2]<\infty
		\end{aligned}
	\right\},\displaybreak[1]\\
	&L^\infty_\mathbb{F}(t,T;L^2(\Omega;\mathbb{H})):=\left\{\chi=(\chi_s)_{s\in[t,T]}\relmiddle|
		\begin{aligned}
		&\chi\ \text{is}\ \mathbb{H}\text{-valued,}\ \mathbb{F}\text{-progressively measurable}\\
		&\text{and satisfies}\ \esssup\mathbb{E}[|\chi_s|^2]<\infty
		\end{aligned}
	\right\},\displaybreak[1]\\
	&L^2_\mathbb{F}(t,T;\mathbb{H}):=\left\{\chi=(\chi_s)_{s\in[t,T]}\relmiddle|
		\begin{aligned}
		&\chi\ \text{is}\ \mathbb{H}\text{-valued,}\ \mathbb{F}\text{-progressively measurable}\\
		&\text{and satisfies}\ \mathbb{E}\left[\int^T_t|\chi_s|^2\,ds\right]<\infty
		\end{aligned}
	\right\},\displaybreak[1]\\
	&L^2_\mathbb{F}(\Omega;C([t,T];\mathbb{H})):=\left\{\chi=(\chi_s)_{s\in[t,T]}\relmiddle|
		\begin{aligned}
		&\chi\ \text{is}\ \mathbb{H}\text{-valued,}\ \mathbb{F}\text{-adapted, continuous}\\
		&\text{and satisfies}\ \mathbb{E}\left[\sup_{s\in[t,T]}|\chi_s|^2\right]<\infty
		\end{aligned}
	\right\},\displaybreak[1]\\
	&L^2_\mathbb{F}(\Delta[0,T];\mathbb{H}):=\left\{(\chi^t_s)_{(t,s)\in\Delta[0,T]}\relmiddle|
		\begin{aligned}
		&(\chi^t_s)_{s\in[t,T]}\in L^2_\mathbb{F}(t,T;\mathbb{H}),\ \forall\,t\in[0,T],\\
		&\sup_{t\in[0,T]}\mathbb{E}\left[\int^T_t|\chi^t_s|^2\,ds\right]<\infty
		\end{aligned}
	\right\},\displaybreak[1]\ \shortintertext{and}
	&L^2_\mathbb{F}(\Omega;C(\Delta[0,T];\mathbb{H})):=\left\{(\chi^t_s)_{(t,s)\in\Delta[0,T]}\relmiddle|
		\begin{aligned}
		&(\chi^t_s)_{s\in[t,T]}\in L^2_\mathbb{F}(\Omega;C([t,T];\mathbb{H})),\ \forall\,t\in[0,T],\\
		&\sup_{t\in[0,T]}\mathbb{E}\left[\sup_{s\in[t,T]}|\chi^t_s|^2\right]<\infty
		\end{aligned}
	\right\}.
\end{align*}

%%%% Subsection

\subsection{Small-time solvability and stability estimates}\label{subsection small-time solvability}

\paragraph{}
\ \,For $T>0$, we consider the system (\ref{flow of FBSDE}). We call this system a \emph{flow of forward-backward stochastic differential equations} and we use the notation $\text{FFBSDE}(T)$, where $T>0$ represents the terminal time of the system and the term ``FFBSDE'' stands for a ``Flow of Forward-Backward Stochastic Differential Equations''. The system (\ref{flow of FBSDE}) consists of a single forward SDE for $X$ and a continuum of BSDEs for $(Y^t,Z^t)$, $t\in[0,T]$, that are coupled via the ``diagonal term'' $Y^s_s$.
\par
We impose the following assumptions on the coefficients.

% Assumption
\begin{assum}
\begin{description}
\item[(A1)]
$x\in\mathbb{R}^n$. The mappings
\begin{align*}
	&[0,T]\times\mathbb{R}^n\times\mathbb{R}^m\times\Omega\ni(s,x,\eta,\omega)\mapsto B(s,x,\eta,\omega)\in\mathbb{R}^n\ \shortintertext{and}
	&[0,T]\times\mathbb{R}^n\times\mathbb{R}^m\times\Omega\ni(s,x,\eta,\omega)\mapsto\Sigma(s,x,\eta,\omega)\in\mathbb{R}^{n\times d}
\end{align*}
are $\left(\mathcal{F}_s\right)_{s\in[0,T]}$-progressively measurable. Moreover, for each $t\in[0,T]$, the mapping
\begin{align*}
	&[t,T]\times\mathbb{R}^n\times\mathbb{R}^n\times\mathbb{R}^n\times\mathbb{R}^m\times\mathbb{R}^m\times\mathbb{R}^{m\times d}\times\Omega\ni(s,\xi,\bar{x},x,\eta,y,z,\omega)\\
	&\hspace{2cm}\mapsto F(t,s,\xi,\bar{x},x,\eta,y,z,\omega)\in\mathbb{R}^m
\end{align*}
is $\left(\mathcal{F}_s\right)_{s\in[t,T]}$-progressively measurable and the mapping
\begin{equation*}
	\mathbb{R}^n\times\mathbb{R}^n\times\mathbb{R}^n\times\Omega\ni(\xi,\bar{x},x,\omega)\mapsto G(t,\xi,\bar{x},x,\omega)\in\mathbb{R}^m
\end{equation*}
is $\mathcal{B}(\mathbb{R}^n)\otimes\mathcal{B}(\mathbb{R}^n)\otimes\mathcal{B}(\mathbb{R}^n)\otimes\mathcal{F}_T$-measurable.
\end{description}
\begin{description}
\item[(A2)]
\begin{align*}
	&R:=\mathbb{E}\left[\left(\int^T_0|B_0(s)|\,ds\right)^2+\int^T_0|\Sigma_0(s)|^2\,ds\right]\\
	&\hspace{2cm}+\sup_{t\in[0,T]}\mathbb{E}\left[\left(\int^T_t|F_0(t,s)|\,ds\right)^2+|G_0(t)|^2\right]<\infty,
\end{align*}
where, for example, $B_0(s):=B(s,0,0)$, $F_0(t,s):=F(t,s,0,0,0,0,0,0)$, etc.
\item[(A3)]
There exists a constant $L>0$ such that
\begin{align*}
	&|B(s,x,\eta,\omega)-B(s,x',\eta',\omega)|\leq L(|x-x'|+|\eta-\eta'|),\\
	&|\Sigma(s,x,\eta,\omega)-\Sigma(s,x',\eta',\omega)|\leq L(|x-x'|+|\eta-\eta'|),\\
	&|F(t,s,\xi,\bar{x},x,\eta,y,z,\omega)-F(t,s,\xi',\bar{x}',x',\eta',y',z',\omega)|\\
	&\hspace{1cm}\leq L(|\xi-\xi'|+|\bar{x}-\bar{x}'|+|x-x'|+|\eta-\eta'|+|y-y'|+|z-z'|), \shortintertext{and}
	&|G(t,\xi,\bar{x},x,\omega)-G(t,\xi',\bar{x}',x',\omega)|\leq L(|\xi-\xi'|+|\bar{x}-\bar{x}'|+|x-x'|),
\end{align*}
for any $(t,s,\omega)\in\Delta[0,T]\times\Omega$, $\xi,\xi',\bar{x},\bar{x}',x,x'\in\mathbb{R}^n$, $\eta,\eta',y,y'\in\mathbb{R}^m$, and $z,z'\in\mathbb{R}^{m\times d}$.
\item[(A4)]
There exists an increasing function $\rho\colon[0,\infty)\to[0,\infty)$ with $\lim_{t\downarrow0}\rho(t)=\rho(0)=0$ such that
\begin{align*}
	&|F(t,s,\xi,\bar{x},x,\eta,y,z,\omega)-F(t',s,\xi,\bar{x},x,\eta,y,z,\omega)|+|G(t,\xi,\bar{x},x,\omega)-G(t',\xi,\bar{x},x,\omega)|\\
	&\leq\rho(|t-t'|)(1+|\xi|+|x|+|\bar{x}|+|\eta|+|y|+|z|)
\end{align*}
for any $\omega\in\Omega$, $s\in[0,T]$, $t,t'\in[0,s]$, $\xi,x,\bar{x}\in\mathbb{R}^n$, $\eta,y\in\mathbb{R}^m$ and $z\in\mathbb{R}^{m\times d}$.
\end{description}
\end{assum}

\par
In the sequel, the $\xi$-variable (resp.\ $\bar{x}$-variable) represents the $X_t$-dependency (resp.\ $\mathbb{E}_t[X_s]$-dependency) for the coefficients $F$ and $G$.
\par
At first, we define the concept of equilibrium solutions of FFBSDE$(T)$. The name ``equilibrium solutions'' comes from ``equilibrium controls'' for time-inconsistent stochastic control problems that are motivations to consider flows of forward-backward SDEs.

% Definition
\begin{defi}\label{def equilibrium solution}
For each $T>0$, we call a triplet $(X,Y,Z)$ an \emph{equilibrium solution} of $\text{FFBSDE}(T)$ if $(X,Y,Z)\in L^2_\mathbb{F}(\Omega;C([0,T];\mathbb{R}^n))\times L^2_\mathbb{F}(\Omega;C(\Delta[0,T];\mathbb{R}^m))\times L^2_\mathbb{F}(\Delta[0,T];\mathbb{R}^{m\times d})$, the process $(Y^s_s)_{s\in[0,T]}$ is progressively measurable, and they satisfy equations in (\ref{flow of FBSDE}) in the usual It\^{o} sense, for each $t\in[0,T]$.
\end{defi}

Before proving the small-time solvability, we shall make some remarks on the definition of equilibrium solutions of FFBSDE$(T)$.

% Remark
\begin{rem}
For each $t\in[0,T]$, since $(Y^t_s)_{s\in[t,T]}$ is a continuous adapted process defined on $[t,T]$, the diagonal term $Y^t_t$ is well-defined and it is $\mathcal{F}_t$-measurable.
\end{rem}

% Remark
\begin{rem}
Since the system (\ref{flow of FBSDE}) has a continuum of backward equations, we need to be careful for ``$\mathbb{P}$-a.s.''\,validity of equations and measurability of the adapted process $(Y^s_s)_{s\in[0,T]}$. Our definition of equilibrium solutions imposes that, for each $t\in[0,T]$, $(Y^t_s,Z^t_s)_{s\in[t,T]}$ solves the BSDE on $[t,T]$ $\mathbb{P}$-a.s., its null set being allowed to depend on $t\in[0,T]$, and $(Y^s_s)_{s\in[0,T]}$ is progressively measurable.
\end{rem}

In fact, the following lemma guarantees the progressive measurability of the diagonal term $(Y^s_s)_{s\in[0,T]}$.

% Lemma
\begin{lemm}\label{measurability lemma}
 Let $T>0$ be arbitrary and suppose that Assumptions~(A1)--(A4) hold. Assume that we are given adapted processes $X\in L^2_\mathbb{F}(\Omega;C([0,T];\mathbb{R}^n))$ and $\mathcal{Y}\in L^2_\mathbb{F}(0,T;\mathbb{R}^m)$. For each $t\in[0,T]$, let $(Y^t,Z^t)\in L^2_\mathbb{F}(\Omega;C([t,T];\mathbb{R}^m))\times L^2_\mathbb{F}(t,T;\mathbb{R}^{m\times d})$ be the unique (up to a null set) solution of the BSDE
\begin{equation}\label{measurability lemma BSDE}
\begin{cases}
		dY^t_s=-F(t,s,X_t,\mathbb{E}_t[X_s],X_s,\mathcal{Y}_s,Y^t_s,Z^t_s)\,ds+Z^t_s\,dW_s,\ s\in[t,T],\\
		Y^t_T=G(t,X_t,\mathbb{E}_t[X_T],X_T).
\end{cases}
\end{equation}
Then, the process $(Y^s_s)_{s\in[0,T]}$ has a progressively measurable version.
\end{lemm}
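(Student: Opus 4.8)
The plan is to construct a progressively measurable process that coincides, for almost every $s$, with the diagonal $Y^s_s$, by realizing it as an $L^2$-limit of processes obtained from freezing the time-parameter $t$ on finer and finer grids, each of which is manifestly progressively measurable.

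First I would collect the analytic ingredients. Under (A1)--(A3), the standing assumptions on $X$ and $\mathcal{Y}$, and (A2), the data of the BSDE \eqref{measurability lemma BSDE} have the square-integrability required by the classical theory, and the standard a priori estimate for Lipschitz BSDEs yields a bound uniform in the parameter,
\[
	\sup_{t\in[0,T]}\mathbb{E}\left[\sup_{s\in[t,T]}|Y^t_s|^2+\int^T_t|Z^t_s|^2\,ds\right]<\infty .
\]
I fix once and for all a genuine continuous adapted representative of each pair $(Y^t,Z^t)$. The crucial point is then a modulus-of-continuity estimate in the parameter: if $0\le t\le t'\le T$, both $Y^t$ and $Y^{t'}$ solve Lipschitz BSDEs on the \emph{common} interval $[t',T]$ with the same terminal time $T$, so applying the standard BSDE stability estimate on $[t',T]$ and invoking (A3)--(A4) to bound the resulting differences of drivers and terminal values gives
\[
	\mathbb{E}\left[\sup_{u\in[t',T]}|Y^t_u-Y^{t'}_u|^2\right]\le C\,\psi(t,t'),
\]
where $C$ depends only on $L$, $T$ and the uniform bound above, and
\begin{multline*}
	\psi(t,t'):=\rho(|t-t'|)^2+\mathbb{E}\bigl[|X_t-X_{t'}|^2\bigr]+\mathbb{E}\bigl[|\mathbb{E}_t[X_T]-\mathbb{E}_{t'}[X_T]|^2\bigr]\\
	+\int^T_{t'}\mathbb{E}\bigl[|\mathbb{E}_t[X_u]-\mathbb{E}_{t'}[X_u]|^2\bigr]\,du .
\end{multline*}
Since $X$ has a.s.\ continuous paths with $\mathbb{E}[\sup_{s}|X_s|^2]<\infty$, and since for each fixed $u$ the process $r\mapsto\mathbb{E}_r[X_u]$ ($r\le u$) is a continuous $L^2$-bounded martingale, dominated convergence shows $\psi(t,t')\to0$ as $|t-t'|\to0$, and that $\psi$ is bounded on $\Delta[0,T]$.

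Next I would run the approximation. For $n\ge1$ put $t^n_k:=kT/n$ and $\phi_n(s):=t^n_k$ for $s\in[t^n_k,t^n_{k+1})$, $\phi_n(T):=T$; since $\phi_n(s)\le s$ the formula $\Xi^n_s:=Y^{\phi_n(s)}_s$ defines a genuine process, and on each $[t^n_k,t^n_{k+1})$ it is the restriction of the continuous adapted process $Y^{t^n_k}$, so $\Xi^n$ is progressively measurable on $[0,T]$ and lies in $L^2_\mathbb{F}(0,T;\mathbb{R}^m)$. The estimate above together with $|\phi_n(s)-\phi_m(s)|\le T/n+T/m$ gives $\mathbb{E}[|\Xi^n_s-\Xi^m_s|^2]\le C\,\psi(\phi_n(s)\wedge\phi_m(s),\,\phi_n(s)\vee\phi_m(s))$ for every $s$, hence $\mathbb{E}\bigl[\int^T_0|\Xi^n_s-\Xi^m_s|^2\,ds\bigr]\to0$ by dominated convergence; thus $(\Xi^n)_n$ is Cauchy in $L^2_\mathbb{F}(0,T;\mathbb{R}^m)$, and its limit has, after passing to an a.e.-convergent subsequence and taking a $\liminf$, a progressively measurable representative $V$. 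Finally, for each fixed $s$ the same estimate gives $\mathbb{E}[|\Xi^n_s-Y^s_s|^2]\le C\,\psi(\phi_n(s),s)\to0$; comparing with the convergence $\Xi^n\to V$ in $L^2_\mathbb{F}(0,T;\mathbb{R}^m)$ shows $V_s=Y^s_s$ a.s.\ for a.e.\ $s\in[0,T]$, so $V$ is the sought progressively measurable version of $(Y^s_s)_{s\in[0,T]}$ (which is all that is needed to give meaning to the integrals in \eqref{flow of FBSDE}).

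I expect the only genuine work to be hidden in the stability step, namely the verification that $\psi(t,t')$ really tends to $0$: this is where the continuity of $X$ and the bound $\mathbb{E}[\sup_{s}|X_s|^2]<\infty$ enter. The slightly delicate term is $\int^T_{t'}\mathbb{E}[|\mathbb{E}_t[X_u]-\mathbb{E}_{t'}[X_u]|^2]\,du$, for which one passes to the limit under the $du$-integral by dominated convergence, the integrand being dominated by $4\,\mathbb{E}[\sup_{s}|X_s|^2]$. The remaining ingredients — the uniform BSDE bounds and the soft closure argument giving progressive measurability of an $L^2$-limit — are routine.
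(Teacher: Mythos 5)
Your argument follows essentially the same route as the paper's proof: a uniform a priori bound over the parameter $t$, a stability estimate controlled by $\rho(|t-t'|)$ together with the $L^2$-increments of $X$ and of the martingales $r\mapsto\mathbb{E}_r[X_u]$, and a passage to the limit along piecewise-frozen approximants $Y^{\phi_n(s)}_s$, which are manifestly progressively measurable. The one substantive discrepancy is in the last step: you conclude only that $V_s=Y^s_s$ a.s.\ for \emph{Lebesgue-a.e.}\ $s$, whereas the lemma asserts a version, i.e.\ $\mathbb{P}(V_s=Y^s_s)=1$ for \emph{every} $s\in[0,T]$. You already have everything needed to close this gap: your bound $\mathbb{E}[|\Xi^n_s-Y^s_s|^2]\le C\,\psi(\phi_n(s),s)$ is in fact uniform in $s$ (the functions of $s$ entering $\psi$ are continuous on the compact $[0,T]$, hence uniformly continuous), so one can extract a subsequence $n_k$ with $\sup_{s}\mathbb{E}[|\Xi^{n_k}_s-Y^s_s|^2]\le 4^{-k}$, apply Chebyshev and Borel--Cantelli at each fixed $s$, and define $V_s:=\lim_k\Xi^{n_k}_s$ on the progressively measurable set where the limit exists. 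This is precisely the paper's construction, which chooses the partitions so that $\mathbb{P}\{|Y^s_s-\eta^k_s|\ge 2^{-k}\}\le 2^{-k}$ for all $s$ and then invokes Borel--Cantelli pointwise in $s$. A purely cosmetic difference: the paper quantifies $\mathbb{E}[|\mathbb{E}_s[X_r]-\mathbb{E}_t[X_r]|^2]$ explicitly via the martingale representation $X_r=\mathbb{E}[X_r]+\int_0^r\chi^r_u\,dW_u$, while you use continuity of the conditional-expectation martingale plus dominated convergence; both are adequate.
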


% Proof
\begin{proof}
In this proof, $C>0$ represents a constant which is independent of $(t,s)\in\Delta[0,T]$ and allowed to vary from line to line. By the standard argument of BSDEs (see, e.g. \cite{b_Zhang_17}), for any given $X\in L^2_\mathbb{F}(\Omega;C([0,T];\mathbb{R}^n))$ and $\mathcal{Y}\in L^2_\mathbb{F}(0,T;\mathbb{R}^m)$ and each $t\in[0,T]$, BSDE (\ref{measurability lemma BSDE}) has a unique adapted solution $(Y^t,Z^t)\in L^2_\mathbb{F}(\Omega;C([t,T];\mathbb{R}^m))\times L^2_\mathbb{F}(t,T;\mathbb{R}^{m\times d})$ and it holds that
\begin{align}\label{measurability lemma estimate 1}
	\nonumber&\mathbb{E}\left[\sup_{s\in[t,T]}\left|Y^t_s\right|^2+\int^T_t\left|Z^t_s\right|^2\,ds\right]\\
	&\leq C\mathbb{E}\left[\left|G(t,X_t,\mathbb{E}_t[X_T],X_T)\right|^2+\left(\int^T_t|F(t,s,X_t,\mathbb{E}_t[X_s],X_s,\mathcal{Y}_s,0,0)|\,ds\right)^2\right]
\end{align}
for any $t\in[0,T]$, and
\begin{align}\label{measurability lemma estimate 2}
	\nonumber&\mathbb{E}\left[\sup_{r\in[s,T]}\left|Y^s_r-Y^t_r\right|^2\right]\\
	\nonumber&\leq C\mathbb{E}\Biggl[\left|G(s,X_s,\mathbb{E}_s[X_T],X_T)-G(t,X_t,\mathbb{E}_t[X_T],X_T)\right|^2\\
	&\hspace{1cm}+\left(\int^T_s|F(s,r,X_s,\mathbb{E}_s[X_r],X_r,\mathcal{Y}_r,Y^s_r,Z^s_r)-F(t,r,X_t,\mathbb{E}_t[X_r],X_r,\mathcal{Y}_r,Y^s_r,Z^s_r)|\,dr\right)^2\Biggr]
\end{align}
for any $(t,s)\in\Delta[0,T]$. By Assumptions (A1)--(A2) and (\ref{measurability lemma estimate 1}), we see that
\begin{align}\label{measurability lemma estimate 3}
	\nonumber\sup_{t\in[0,T]}\mathbb{E}\left[\sup_{s\in[t,T]}\left|Y^t_s\right|^2+\int^T_t\left|Z^t_s\right|^2\,ds\right]&\leq C\left(R+\mathbb{E}\left[\sup_{s\in[0,T]}\left|X_s\right|^2+\int^T_0\left|\mathcal{Y}_s\right|^2\,ds\right]\right)\\
	&\leq C.
\end{align}
Assumptions (A3)--(A4) and inequalities (\ref{measurability lemma estimate 2})--(\ref{measurability lemma estimate 3}) yield that
\begin{align*}
	&\mathbb{E}\left[\left|Y^s_s-Y^t_s\right|^2\right]\\
	&\leq C\mathbb{E}\Biggl[\left|G(s,X_s,\mathbb{E}_s[X_T],X_T)-G(t,X_t,\mathbb{E}_t[X_T],X_T)\right|^2\\
	\displaybreak[1]
	&\hspace{1cm}+\left(\int^T_s|F(s,r,X_s,\mathbb{E}_s[X_r],X_r,\mathcal{Y}_r,Y^s_r,Z^s_r)-F(t,r,X_t,\mathbb{E}_t[X_r],X_r,\mathcal{Y}_r,Y^s_r,Z^s_r)|\,dr\right)^2\Biggr]\\
	&\leq C\left\{\mathbb{E}\Biggl[\left|G(s,X_s,\mathbb{E}_s[X_T],X_T)-G(t,X_s,\mathbb{E}_s[X_T],X_T)\right|^2\right.\\
	&\left.\hspace{1cm}+\left(\int^T_s|F(s,r,X_s,\mathbb{E}_s[X_r],X_r,\mathcal{Y}_r,Y^s_r,Z^s_r)-F(t,r,X_s,\mathbb{E}_s[X_r],X_r,\mathcal{Y}_r,Y^s_r,Z^s_r)|\,ds\right)^2\Biggr]\right.\\
	&\left.\hspace{1cm}+\mathbb{E}\Biggl[\left|G(t,X_s,\mathbb{E}_s[X_T],X_T)-G(t,X_t,\mathbb{E}_t[X_T],X_T)\right|^2\right.\\
	&\left.\hspace{1cm}+\left(\int^T_s|F(t,r,X_s,\mathbb{E}_s[X_r],X_r,\mathcal{Y}_r,Y^s_r,Z^s_r)-F(t,r,X_t,\mathbb{E}_t[X_r],X_r,\mathcal{Y}_r,Y^s_r,Z^s_r)|\,ds\right)^2\Biggr]\right\}\\
	\displaybreak[1]
	&\leq C\left(\rho(s-t)^2+\mathbb{E}[\left|X_s-X_t\right|^2]+\mathbb{E}\left[\left|\mathbb{E}_s[X_T]-\mathbb{E}_t[X_T]\right|^2\right]+\int^T_s\mathbb{E}\left[\left|\mathbb{E}_s[X_r]-\mathbb{E}_t[X_r]\right|^2\right]\,dr\right).
\end{align*}
Note that, for each $r\in[0,T]$, there exists a unique adapted process $\chi^r\in L^2_\mathbb{F}(0,r;\mathbb{R}^n)$ such that $X_r=\mathbb{E}[X_r]+\int^r_0\chi^r_u\,dW_u$, and hence
\begin{equation*}
	\mathbb{E}\left[\left|\mathbb{E}_s[X_T]-\mathbb{E}_t[X_T]\right|^2\right]=\int^s_t\mathbb{E}[|\chi^T_u|^2]\,du
\end{equation*}
and
\begin{align*}
	\int^T_s\mathbb{E}\left[\left|\mathbb{E}_s[X_r]-\mathbb{E}_t[X_r]\right|^2\right]\,dr&=\int^T_s\int^s_t\mathbb{E}[|\chi^r_u|^2]\,dudr=\int^s_t\int^T_s\mathbb{E}[|\chi^r_u|^2]\,drdu\\
	&\leq\int^s_t\int^T_u\mathbb{E}[|\chi^r_u|^2]\,drdu
\end{align*}
for $(t,s)\in\Delta[0,T]$. Thus, we obtain
\begin{equation}\label{measurability lemma estimate 4}
	\mathbb{E}\left[\left|Y^s_s-Y^t_s\right|^2\right]\leq C\left(\rho(s-t)^2+\mathbb{E}[|X_s-X_t|^2]+\int^s_t\mathbb{E}[|\chi^T_u|^2]\,du+\int^s_t\int^T_u\mathbb{E}[|\chi^r_u|^2]\,drdu\right).
\end{equation}
Note that, since $X\in L^2_\mathbb{F}(\Omega;C([0,T];\mathbb{R}^n))$, the mapping $[0,T]\ni s\mapsto X_s\in L^2_{\mathcal{F}_T}(\Omega;\mathbb{R}^n)$ is continuous, and hence it is uniformly continuous. Furthermore, since $\chi^T\in L^2_\mathbb{F}(0,T;\mathbb{R}^n)$ and
\begin{align*}
	\int^T_0\int^T_u\mathbb{E}[|\chi^r_u|^2]\,drdu=\int^T_0\int^r_0\mathbb{E}[|\chi^r_u|^2]\,dudr=\int^T_0(\mathbb{E}[|X_r|^2]-|\mathbb{E}[X_r]|^2)\,dr<\infty,
\end{align*}
we see that the functions $[0,T]\ni s\mapsto \int^s_0\mathbb{E}[|\chi^T_u|^2]\,du$ and $[0,T]\ni s\mapsto\int^s_0\int^T_u\mathbb{E}[|\chi^r_u|^2]\,drdu$ are continuous, and hence they are uniformly continuous. Thus, the right hand side of (\ref{measurability lemma estimate 4}) tends to zero as $t\uparrow s$ uniformly in $s\in(0,T]$. Hence, for each $k\in\mathbb{N}$, there exists a finite partition $\Pi^k=\{t^k_l|l=0,1,\dots,N_k\}$, $0=t^k_0<t^k_1<\dots<t^k_{N_k}=T$, such that
\begin{equation*}
	\mathbb{P}\left\{\left|Y^s_s-Y^{t^k_{l-1}}_s\right|\geq\frac{1}{2^k}\right\}\leq\frac{1}{2^k}\ \text{if}\ s\in\left[t^k_{l-1},t^k_l\right),\ l=1,\dots,N_k.
\end{equation*}
Define
\begin{equation*}
	\eta^k_s(\omega):=
	\begin{cases}
		Y^{t^k_{l-1}}_s(\omega)\ &\text{if}\ s\in\left[t^k_{l-1},t^k_l\right),\ l=1,\dots,N_k,\\
		Y^T_T(\omega)\ &\text{if}\ s=T,
	\end{cases}
\end{equation*}
for each $(s,\omega)\in[0,T]\times\Omega$ and $k\in\mathbb{N}$. Then, the processes $\eta^k=\left(\eta^k_s\right)_{s\in[0,T]}$ are progressively measurable and so is the set $A:=\left\{(s,\omega)\in[0,T]\times\Omega\relmiddle|\exists\ \lim_{k\to\infty}\eta^k_s(\omega)\right\}$. Hence, the process $\eta=\left(\eta_s\right)_{s\in[0,T]}$ defined by
\begin{equation*}
	\eta_s(\omega):=
	\begin{cases}
		\lim_{k\to\infty}\eta^k_s(\omega)\ &\text{if}\ (s,\omega)\in A,\\
		0\ &\text{if}\ (s,\omega)\notin A,
	\end{cases}
\end{equation*}
is also progressively measurable. Fix an arbitrary $s\in[0,T]$. Then, the Borel--Cantelli lemma yields that, for $\mathbb{P}$-a.e.\,$\omega\in\Omega$, $\left|Y^s_s(\omega)-\eta^k_s(\omega)\right|\leq2^{-k}$ holds for any sufficiently large $k\in\mathbb{N}$, and hence $\left(\eta_s\right)_{s\in[0,T]}$ is a version of $(Y^s_s)_{s\in[0,T]}$. 
\end{proof}

Now we prove the small-time solvability of a flow of forward-backward SDEs.

% Theorem
\begin{theo}\label{theorem small-time solvability}
Suppose that Assumptions (A1)--(A4) hold. Then there exists a constant $T_0>0$ which depends only on the Lipschitz constant $L$ such that when $T\leq T_0$ there exists a unique equilibrium solution $(X,Y,Z)\in L^2_\mathbb{F}(\Omega;C([0,T];\mathbb{R}^n))\times L^2_\mathbb{F}(\Omega;C(\Delta[0,T];\mathbb{R}^m))\times L^2_\mathbb{F}(\Delta[0,T];\mathbb{R}^{m\times d})$ of FFBSDE$(T)$.\\
\end{theo}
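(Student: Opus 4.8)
The plan is to set up a contraction mapping argument on the space
\[
\mathcal{X}_T:=L^2_\mathbb{F}(\Omega;C([0,T];\mathbb{R}^n))\times L^2_\mathbb{F}(0,T;\mathbb{R}^m),
\]
parametrizing the solution by the pair $(X,\mathcal{Y})$, where $\mathcal{Y}$ plays the role of the diagonal term $(Y^s_s)_{s\in[0,T]}$. Given $(X,\mathcal{Y})\in\mathcal{X}_T$, I would first solve, for each $t\in[0,T]$, the BSDE \eqref{measurability lemma BSDE} on $[t,T]$ to obtain $(Y^t,Z^t)$; by Lemma~\ref{measurability lemma} the diagonal term $(Y^s_s)_{s\in[0,T]}$ admits a progressively measurable version, call it $\widetilde{\mathcal{Y}}$. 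Then I would solve the forward SDE
\[
d\widetilde{X}_s=B(s,\widetilde{X}_s,\widetilde{\mathcal{Y}}_s)\,ds+\Sigma(s,\widetilde{X}_s,\widetilde{\mathcal{Y}}_s)\,dW_s,\quad \widetilde{X}_0=x,
\]
which is a standard Lipschitz SDE (Assumptions (A2)--(A3)) and so has a unique solution $\widetilde{X}\in L^2_\mathbb{F}(\Omega;C([0,T];\mathbb{R}^n))$. This defines a map $\Phi\colon(X,\mathcal{Y})\mapsto(\widetilde{X},\widetilde{\mathcal{Y}})$ on $\mathcal{X}_T$; a fixed point of $\Phi$, together with the associated family $\{(Y^t,Z^t)\}_{t\in[0,T]}$, is precisely an equilibrium solution of FFBSDE$(T)$, and uniqueness of the fixed point will give uniqueness of the equilibrium solution. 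One should also check a priori that $\Phi$ maps $\mathcal{X}_T$ into itself and that the full family $(Y,Z)$ lies in $L^2_\mathbb{F}(\Omega;C(\Delta[0,T];\mathbb{R}^m))\times L^2_\mathbb{F}(\Delta[0,T];\mathbb{R}^{m\times d})$, which follows from estimate \eqref{measurability lemma estimate 3} and the corresponding estimate for $X$.

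Next I would establish the contraction estimate. Take two inputs $(X^1,\mathcal{Y}^1)$, $(X^2,\mathcal{Y}^2)\in\mathcal{X}_T$ with outputs $(\widetilde{X}^i,\widetilde{\mathcal{Y}}^i)$ and corresponding BSDE solutions $(Y^{i,t},Z^{i,t})$. Standard BSDE stability estimates, using the Lipschitz property (A3) of $F$ and $G$ in all arguments, give for each $t$
\[
\mathbb{E}\Bigl[\sup_{s\in[t,T]}|Y^{1,t}_s-Y^{2,t}_s|^2+\int_t^T|Z^{1,t}_s-Z^{2,t}_s|^2\,ds\Bigr]
\le C\,\mathbb{E}\Bigl[|X^1_t-X^2_t|^2+\sup_{s}|X^1_s-X^2_s|^2+\int_t^T|\mathcal{Y}^1_s-\mathcal{Y}^2_s|^2\,ds\Bigr],
\]
where the $\mathbb{E}_t[X_s]$-dependence is handled exactly as in the proof of Lemma~\ref{measurability lemma} (via the martingale representation $X_s=\mathbb{E}[X_s]+\int_0^s\chi^s_u\,dW_u$ and Fubini), absorbing it into the $X$-terms. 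Taking the supremum over $t$ and passing to the diagonal term gives
\[
\|\widetilde{\mathcal{Y}}^1-\widetilde{\mathcal{Y}}^2\|^2_{L^2_\mathbb{F}(0,T;\mathbb{R}^m)}
=\int_0^T\mathbb{E}[|Y^{1,s}_s-Y^{2,s}_s|^2]\,ds
\le C\,T\cdot\|(X^1,\mathcal{Y}^1)-(X^2,\mathcal{Y}^2)\|^2_{\mathcal{X}_T}.
\]
For the forward component, the standard SDE stability estimate together with Gronwall and (A3) yields
\[
\mathbb{E}\Bigl[\sup_{s\in[0,T]}|\widetilde{X}^1_s-\widetilde{X}^2_s|^2\Bigr]\le C\,T\cdot\mathbb{E}\Bigl[\int_0^T|\widetilde{\mathcal{Y}}^1_s-\widetilde{\mathcal{Y}}^2_s|^2\,ds\Bigr]\le C\,T^2\cdot\|(X^1,\mathcal{Y}^1)-(X^2,\mathcal{Y}^2)\|^2_{\mathcal{X}_T}.
\]
Combining, $\|\Phi(X^1,\mathcal{Y}^1)-\Phi(X^2,\mathcal{Y}^2)\|^2_{\mathcal{X}_T}\le C(T+T^2)\|(X^1,\mathcal{Y}^1)-(X^2,\mathcal{Y}^2)\|^2_{\mathcal{X}_T}$ with $C$ depending only on $L$ (and $T$ bounded, say $T\le 1$). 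Choosing $T_0$ so that $C(T_0+T_0^2)<1$ makes $\Phi$ a contraction on the Banach space $\mathcal{X}_T$ for every $T\le T_0$, and the Banach fixed point theorem delivers a unique fixed point, hence a unique equilibrium solution.

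The main obstacle I anticipate is \textbf{not} the contraction estimate per se but the bookkeeping around the diagonal term and the conditional-expectation arguments $\mathbb{E}_t[X_s]$. One must verify that the map $\Phi$ is genuinely well-defined on $\mathcal{X}_T$ — in particular that $\widetilde{\mathcal{Y}}$ is progressively measurable (this is exactly what Lemma~\ref{measurability lemma} provides, so it is available) — and that the stability estimate for the family of BSDEs, after taking the supremum over the parameter $t$ and then integrating the diagonal in $s$, produces a bound with a genuine factor of $T$ rather than an $O(1)$ constant; the $\int_t^T\mathbb{E}[|\mathbb{E}_s[X_r]-\mathbb{E}_t[X_r]|^2]\,dr$ term must be controlled by $\int_0^T|\Delta X|^2$-type quantities uniformly in the parameter, via the $\chi^r$-representation and Fubini as in the proof of Lemma~\ref{measurability lemma}. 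Once these measurability and uniform-in-$t$ points are handled cleanly, the smallness of $T$ does all the work and the argument closes routinely.
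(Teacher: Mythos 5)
Your proposal is correct and follows essentially the same route as the paper: a Picard/contraction argument in which one fixes a candidate for the diagonal process, solves the forward SDE and the $t$-parametrized family of BSDEs, invokes Lemma~\ref{measurability lemma} for a progressively measurable version of $(Y^s_s)_{s\in[0,T]}$, and extracts a factor of $T$ from the standard stability estimates. The only (harmless) differences are that you iterate on the pair $(X,\mathcal{Y})$ with the $L^2_\mathbb{F}(0,T;\mathbb{R}^m)$ norm while the paper iterates on $\mathcal{Y}$ alone in $L^\infty_\mathbb{F}(0,T;L^2(\Omega;\mathbb{R}^m))$, and that the $\mathbb{E}_t[X_s]$-terms in the contraction step need only the $L^2$-contractivity of conditional expectation rather than the martingale-representation argument of Lemma~\ref{measurability lemma}.
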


%Proof
\begin{proof}
Let $0<T\leq T_0$ with $T_0>0$ being determined later. In this proof, we denote by $C$ a positive constant which depends only on $L$ and is allowed to vary from line to line. For each $\mathcal{Y}\in L^\infty_\mathbb{F}(0,T;L^2(\Omega;\mathbb{R}^m))$, define $\Phi(\mathcal{Y})\in L^\infty_\mathbb{F}(0,T;L^2(\Omega;\mathbb{R}^m))$ by the following procedure:
\begin{enumerate}
\renewcommand{\labelenumi}{(\roman{enumi})}
\item
Define $X\in L^2_\mathbb{F}(\Omega;C([0,T];\mathbb{R}^n))$ as the unique solution of the SDE
\begin{equation*}
\begin{cases}
	dX_s=B(s,X_s,\mathcal{Y}_s)\,ds+\Sigma(s,X_s,\mathcal{Y}_s)\,dW_s,\ s\in[0,T],\\
	X_0=x.
\end{cases}
\end{equation*}
\item
For each $t\in[0,T]$, define $(Y^t,Z^t)\in L^2_\mathbb{F}(\Omega;C([t,T];\mathbb{R}^m))\times L^2_\mathbb{F}(t,T;\mathbb{R}^{m\times d})$ as the unique solution of the BSDE
\begin{equation*}
\begin{cases}
	dY^t_s=-F(t,s,X_t,\mathbb{E}_t[X_s],X_s,\mathcal{Y}_s,Y^t_s,Z^t_s)\,ds+Z^t_s\,dW_s,\ s\in[t,T],\\
	Y^t_T=G(t,X_t,\mathbb{E}_t[X_T],X_T).
\end{cases}
\end{equation*}
\item
Define $\Phi(\mathcal{Y})$ by $\Phi(\mathcal{Y}):=\tilde{\mathcal{Y}}$, where $\tilde{\mathcal{Y}}=(\tilde{\mathcal{Y}}_s)_{s\in[0,T]}$ is a progressively measurable version of the process $(Y^s_s)_{s\in[0,T]}$; see Lemma~\ref{measurability lemma}.
\end{enumerate}
By the estimate (\ref{measurability lemma estimate 3}), the family of processes $\{Y^t\}_{t\in[0,T]}$ defined in (\rnum{2}) satisfies
\begin{equation*}
	\sup_{t\in[0,T]}\mathbb{E}\left[\sup_{s\in[t,T]}\left|Y^t_s\right|^2\,ds\right]\leq C\left(R+\mathbb{E}\left[\sup_{s\in[0,T]}|X_s|^2+\int^T_0|\mathcal{Y}_s|^2\,ds\right]\right)<\infty,
\end{equation*}
Hence the process $\tilde{\mathcal{Y}}$ defined in (\rnum{3}) is in $L^\infty_\mathbb{F}(0,T;L^2(\Omega;\mathbb{R}^m))$. Note that $L^\infty_\mathbb{F}(0,T;L^2(\Omega;\mathbb{R}^m))$ is a Banach space with respect to the norm
\begin{equation*}
	\|\mathcal{Y}\|_\infty:=\esssup\mathbb{E}\left[|\mathcal{Y}_s|^2\right]^{1/2}.
\end{equation*}
In order to prove well-posedness of $\text{FFBSDE}(T)$, it suffices to prove that $\Phi$ is a contraction mapping on $L^\infty_\mathbb{F}(0,T;L^2(\Omega;\mathbb{R}^m))$.
\par
For given inputs $\mathcal{Y}^1,\mathcal{Y}^2\in L^\infty_\mathbb{F}(0,T;L^2(\Omega;\mathbb{R}^m))$, define $X^1,X^2\in L^2_\mathbb{F}(\Omega;C([0,T];\mathbb{R}^n))$, $(Y^{1,t},Z^{1,t}),(Y^{2,t},Z^{2,t})\in L^2_\mathbb{F}(\Omega;C([t,T];\mathbb{R}^m))\times L^2_\mathbb{F}(t,T;\mathbb{R}^{m\times d})$, $t\in[0,T]$, and $\tilde{\mathcal{Y}}^1,\tilde{\mathcal{Y}}^2\in L^\infty_\mathbb{F}(0,T;L^2(\Omega;\mathbb{R}^m))$ by the above procedure (\rnum{1})--(\rnum{3}). 
By using the BDG inequality and Gronwall's inequality, we can easily show that
\begin{equation*}
	\mathbb{E}\left[\sup_{s\in[0,T]}\left|X^1_s-X^2_s\right|^2\right]\leq C\mathbb{E}\left[\int^T_0\left|\mathcal{Y}^1_s-\mathcal{Y}^2_s\right|^2\,ds\right].
\end{equation*}
Then, by the standard estimate of BSDEs, we see that
\begin{align*}
	&\sup_{t\in[0,T]}\mathbb{E}\left[\sup_{s\in[t,T]}\left|Y^{1,t}_s-Y^{2,t}_s\right|^2\right]\\
	&\leq C\sup_{t\in[0,T]}\mathbb{E}\Biggl[\left|G(t,X^1_t,\mathbb{E}_t[X^1_T],X^1_T)-G(t,X^2_t,\mathbb{E}_t[X^2_T],X^2_T)\right|^2\\
	\displaybreak[1]
	&\hspace{0.3cm}+\left(\int^T_t|F(t,s,X^1_t,\mathbb{E}_t[X^1_s],X^1_s,\mathcal{Y}^1_s,Y^{1,t}_s,Z^{1,t}_s)-F(t,s,X^2_t,\mathbb{E}_t[X^2_s],X^2_s,\mathcal{Y}^2_s,Y^{1,t}_s,Z^{1,t}_s)|\,ds\right)^2\Biggr]\\
	&\leq C\mathbb{E}\left[\sup_{s\in[0,T]}\left|X^1_s-X^2_s\right|^2+\int^T_0\left|\mathcal{Y}^1_s-\mathcal{Y}^2_s\right|^2\,ds\right]\\
	&\leq C\mathbb{E}\left[\int^T_0\left|\mathcal{Y}^1_s-\mathcal{Y}^2_s\right|^2\,ds\right].
\end{align*}
In particular, we have
\begin{equation*}
	\|\tilde{\mathcal{Y}}^1-\tilde{\mathcal{Y}}^2\|_\infty^2\leq C\mathbb{E}\left[\int^T_0\left|\mathcal{Y}^1_s-\mathcal{Y}^2_s\right|^2\,ds\right]\leq CT\|\mathcal{Y}^1-\mathcal{Y}^2\|^2_\infty.
\end{equation*}
Thus, if $T_0=T_0(L)>0$ is sufficiently small and $0<T\leq T_0$, then $\Phi$ is a contraction mapping on $L^\infty_\mathbb{F}(0,T;L^2(\Omega;\mathbb{R}^m))$. This completes the proof.
\end{proof}

Next, we prove a stability estimate of equilibrium solutions of flows of forward-backward SDEs.

%Theorem
\begin{theo}\label{theorem stability}
Let $\left(x^1,B^1,\Sigma^1,F^1,G^1\right)$ and $\left(x^2,B^2,\Sigma^2,F^2,G^2\right)$ be coefficients satisfying Assumptions~(A1)--(A4) with constants $\left(R_1,L_1\right)$ and $\left(R_2,L_2\right)$, respectively. Assume that there exists a constant $\tilde{T}>0$ such that, for any $0<T\leq \tilde{T}$ and $i=1,2$, there exists a unique equilibrium solution $(X^i,Y^i,Z^i)$ of $\text{FFBSDE}(T)$ with the coefficients $\left(x^i,B^i,\Sigma^i,F^i,G^i\right)$. Then, there exist constants $0<T_1\leq\tilde{T}$ and $C>0$ that depend only on $L_1$ such that, for each $0<T\leq T_1$, it holds that
\begin{align}\label{theorem stability estimate}
	\nonumber&\mathbb{E}\left[\sup_{s\in[0,T]}\left|X^1_s-X^2_s\right|^2\right]+\sup_{t\in[0,T]}\mathbb{E}\left[\sup_{s\in[t,T]}\left|Y^{1,t}_s-Y^{2,t}_s\right|^2+\int^T_t\left|Z^{1,t}_s-Z^{2,t}_s\right|^2\,ds\right]\\
	\nonumber&\leq C\Biggl(\left|x^1-x^2\right|^2+\mathbb{E}\left[\left(\int^T_0|\delta B(s,X^2_s,Y^{2,s}_s)|\,ds\right)^2+\int^T_0\left|\delta\Sigma(s,X^2_s,Y^{2,s}_s)\right|^2\,ds\right]\\
	\nonumber&\hspace{1cm}+\sup_{t\in[0,T]}\mathbb{E}\Biggl[\left|\delta G(t,X^2_t,\mathbb{E}_t[X^2_T],X^2_T)\right|^2\\
	&\hspace{3cm}+\left(\int^T_t|\delta F(t,s,X^2_t,\mathbb{E}_t[X^2_s],X^2_s,Y^{2,s}_s,Y^{2,t}_s,Z^{2,t}_s)|\,ds\right)^2\Biggr]\Biggr),
\end{align}
where $\delta\Phi:=\Phi^1-\Phi^2$ for $\Phi=B,\Sigma,F,G$.
In particular, it holds that
\begin{equation}\label{theorem estimate}
	\mathbb{E}\left[\sup_{s\in[0,T]}\left|X^1_s\right|^2\right]+\sup_{t\in[0,T]}\mathbb{E}\left[\sup_{s\in[t,T]}\left|Y^{1,t}_s\right|^2+\int^T_t\left|Z^{1,t}_s\right|^2\,ds\right]\leq C\left(\left|x^1\right|^2+R_1\right).
\end{equation}
\end{theo}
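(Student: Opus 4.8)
The plan is to run a feedback estimate in the spirit of the contraction argument in the proof of Theorem~\ref{theorem small-time solvability}: control the difference of the two forward processes by the difference of the two backward diagonals, control the difference of the two backward processes by the difference of the two forward processes, and close the resulting loop by the smallness of $T$. Throughout, $C>0$ denotes a constant depending only on $L_1$ (and the fixed dimensions), varying from line to line; $T_1\le\tilde T$ with $T_1=T_1(L_1)$ will be shrunk a finite number of times, and $0<T\le T_1$, so that both equilibrium solutions exist. Write $\delta x:=x^1-x^2$, $\delta X:=X^1-X^2$, $\delta Y^t:=Y^{1,t}-Y^{2,t}$, $\delta Z^t:=Z^{1,t}-Z^{2,t}$, and $\delta\mathcal{Y}:=(Y^{1,s}_s-Y^{2,s}_s)_{s\in[0,T]}$, which is progressively measurable by the definition of an equilibrium solution. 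Abbreviate
\[
  \Xi:=\mathbb{E}\Bigl[\sup_{s\in[0,T]}|\delta X_s|^2\Bigr],\qquad
  \Upsilon:=\sup_{t\in[0,T]}\mathbb{E}\Bigl[\sup_{s\in[t,T]}|\delta Y^t_s|^2+\int_t^T|\delta Z^t_s|^2\,ds\Bigr],
\]
and let $D$ be the quantity in the large parentheses on the right-hand side of (\ref{theorem stability estimate}). The elementary but crucial observation is that, for every $s\in[0,T]$, $\mathbb{E}[|\delta\mathcal{Y}_s|^2]=\mathbb{E}[|Y^{1,s}_s-Y^{2,s}_s|^2]\le\mathbb{E}[\sup_{r\in[s,T]}|\delta Y^s_r|^2]\le\Upsilon$, so $\esssup\mathbb{E}[|\delta\mathcal{Y}_s|^2]\le\Upsilon$.

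For the forward equation, subtracting the SDEs for $X^1$ and $X^2$ and inserting and subtracting $B^1(s,X^2_s,Y^{2,s}_s)$ and $\Sigma^1(s,X^2_s,Y^{2,s}_s)$ in the drift and diffusion increments, (A3) bounds the common-coefficient parts by $L_1(|\delta X_s|+|\delta\mathcal{Y}_s|)$, and the remainders are exactly $\delta B(s,X^2_s,Y^{2,s}_s)$ and $\delta\Sigma(s,X^2_s,Y^{2,s}_s)$. The Burkholder--Davis--Gundy and Gronwall inequalities (with $T\le\tilde T$ bounding the $T$-dependent prefactors) then yield
\begin{align*}
  \Xi&\le C\Bigl(|\delta x|^2+\mathbb{E}\Bigl[\bigl(\int_0^T|\delta B(s,X^2_s,Y^{2,s}_s)|\,ds\bigr)^2+\int_0^T|\delta\Sigma(s,X^2_s,Y^{2,s}_s)|^2\,ds\Bigr]+T\esssup\mathbb{E}[|\delta\mathcal{Y}_s|^2]\Bigr)\\
  &\le C(D+T\Upsilon).
\end{align*}

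For the backward equations, for each fixed $t\in[0,T]$ regard $Y^{1,t}$ and $Y^{2,t}$ as solutions of BSDEs on $[t,T]$ and apply the standard BSDE stability estimate (as in the derivation of (\ref{measurability lemma estimate 1})), with the generator difference evaluated along $(Y^{2,t},Z^{2,t})$. Splitting the terminal and generator differences by (A3) — keeping $\delta G(t,X^2_t,\mathbb{E}_t[X^2_T],X^2_T)$ and $\delta F(t,s,X^2_t,\mathbb{E}_t[X^2_s],X^2_s,Y^{2,s}_s,Y^{2,t}_s,Z^{2,t}_s)$ as the genuine coefficient-difference terms and bounding the rest by $L_1(|\delta X_t|+|\mathbb{E}_t[\delta X_T]|+|\delta X_T|)$ and $L_1(|\delta X_t|+|\mathbb{E}_t[\delta X_s]|+|\delta X_s|+|\delta\mathcal{Y}_s|)$ — and then using conditional Jensen's inequality ($\mathbb{E}[|\mathbb{E}_t[\delta X_r]|^2]\le\mathbb{E}[|\delta X_r|^2]\le\Xi$), the Cauchy--Schwarz inequality, and the bound $\esssup\mathbb{E}[|\delta\mathcal{Y}_s|^2]\le\Upsilon$, I obtain, uniformly in $t$,
\[
  \Upsilon\le C\bigl(\Xi+T^2\Upsilon+D\bigr).
\]

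It remains to close the loop and specialise. Choosing $T_1=T_1(L_1)$ so small that $CT_1^2\le\tfrac12$ gives $\Upsilon\le C(\Xi+D)$; inserting this into the forward bound gives $\Xi\le C(D+T\Xi+TD)$, and shrinking $T_1$ further so that $CT_1\le\tfrac12$ gives $\Xi\le CD$, hence $\Upsilon\le CD$; adding the two bounds is precisely (\ref{theorem stability estimate}). Finally, (\ref{theorem estimate}) follows by applying (\ref{theorem stability estimate}) with $x^2=0$ and $(B^2,\Sigma^2,F^2,G^2)\equiv0$, which trivially satisfy (A1)--(A4) and whose FFBSDE$(T)$ has the unique equilibrium solution $(0,0,0)$: then $\delta B(s,0,0)=B^1_0(s)$, $\delta\Sigma(s,0,0)=\Sigma^1_0(s)$, $\delta F(t,s,0,\dots,0)=F^1_0(t,s)$ and $\delta G(t,0,0,0)=G^1_0(t)$, so the right-hand side of (\ref{theorem stability estimate}) collapses to $C(|x^1|^2+R_1)$. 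The only non-routine point is this feedback through the diagonal term $\delta\mathcal{Y}$, which genuinely couples the forward and backward estimates; absorbing the ensuing $T\Upsilon$ and $T^2\Upsilon$ terms is exactly where the smallness of $T_1$ (which can be chosen depending only on $L_1$) is needed, while everything else is the standard SDE/BSDE a priori machinery already used in Lemma~\ref{measurability lemma} and Theorem~\ref{theorem small-time solvability}.
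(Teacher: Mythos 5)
Your proof is correct and follows essentially the same route as the paper: SDE and BSDE stability estimates coupled through the diagonal term $Y^{i,s}_s$, closed by choosing $T_1$ small depending only on $L_1$, with the a priori bound (\ref{theorem estimate}) obtained by specializing to the zero system. The only cosmetic difference is that the paper first closes the backward self-coupling by applying Gronwall's inequality in the $t$-variable to $\sup_{s}\mathbb{E}\bigl[|Y^{1,s}_s-Y^{2,s}_s|^2\bigr]$ and reserves the smallness of $T$ for the forward--backward loop, whereas you absorb both feedback terms (your $T\Upsilon$ and $T^2\Upsilon$) directly by smallness; both yield constants depending only on $L_1$.
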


% Proof
\begin{proof}
In this proof, we denote by $C$ a positive constant which depends only on $L_1$ and is allowed to vary from line to line. Let $\mathcal{Y}^1_s:=Y^{1,s}_s$ and $\mathcal{Y}^2_s:=Y^{2,s}_s$ for $s\in[0,T]$. Moreover, in this proof we use the notation
\begin{equation*}
	\mathcal{X}^i_{t,s}=(X^i_t,\mathbb{E}_t[X^i_s],X^i_s),\ (t,s)\in\Delta[0,T],\ i=1,2.
\end{equation*}
Then for $i=1,2$ and for each $t\in[0,T]$, $(Y^{i,t},Z^{i,t})\in L^2_\mathbb{F}(\Omega;C([t,T];\mathbb{R}^m))\times L^2_\mathbb{F}(t,T;\mathbb{R}^{m\times d})$ solves the following standard BSDE:
\begin{equation*}
	\begin{cases}
		dY^{i,t}_s=-F^i(t,s,\mathcal{X}^i_{t,s},\mathcal{Y}^i_s,Y^{i,t}_s,Z^{i,t}_s)\,ds+Z^{i,t}_s\,dW_s,\ s\in[t,T],\\
		Y^{i,t}_T=G(t,\mathcal{X}^i_{t,T}).
	\end{cases}
\end{equation*}
By the stability estimate of solutions of standard BSDEs, we have, for each $t\in[0,T]$,
\begin{align}\label{theorem stability estimate 1}
	\nonumber&\mathbb{E}\left[\sup_{s\in[t,T]}\left|Y^{1,t}_s-Y^{2,t}_s\right|^2+\int^T_t\left|Z^{1,t}_s-Z^{2,t}_s\right|^2\,ds\right]\\
	\nonumber&\leq C\mathbb{E}\Biggl[\left|G^1(t,\mathcal{X}^1_{t,T})-G^2(t,\mathcal{X}^2_{t,T})\right|^2\\
	\nonumber&\hspace{2cm}+\Biggl(\int^T_t|F^1(t,s,\mathcal{X}^1_{t,s},\mathcal{Y}^1_s,Y^{2,t}_s,Z^{2,t}_s)-F^2(t,s,\mathcal{X}^2_{t,s},\mathcal{Y}^2_s,Y^{2,t}_s,Z^{2,t}_s)|\,ds\Biggr)^2\Biggr]\displaybreak[1]\\
	\nonumber&\leq C\mathbb{E}\Biggl[\sup_{s\in[t,T]}\left|X^1_s-X^2_s\right|^2+\int^T_t\left|\mathcal{Y}^1_s-\mathcal{Y}^2_s\right|^2\,ds\\
	&\hspace{2cm}+\left|\delta G(t,\mathcal{X}^2_{t,T})\right|^2+\left(\int^T_t|\delta F(t,s,\mathcal{X}^2_{t,s},\mathcal{Y}^2_s,Y^{2,t}_s,Z^{2,t}_s)|\,ds\right)^2\Biggr].
\end{align}
In particular, we have
\begin{align*}
	&\mathbb{E}\left[\left|\mathcal{Y}^1_t-\mathcal{Y}^2_t\right|^2\right]\\
	&\leq C\Biggl(\int^T_t\mathbb{E}\left[\left|\mathcal{Y}^1_s-\mathcal{Y}^2_s\right|^2\right]\,ds+\mathbb{E}\left[\sup_{s\in[0,T]}\left|X^1_s-X^2_s\right|^2\right]\\
	&\hspace{1.5cm}+\sup_{t\in[0,T]}\mathbb{E}\left[\left|\delta G(t,\mathcal{X}^2_{t,T})\right|^2+\left(\int^T_t|\delta F(t,s,\mathcal{X}^2_{t,s},\mathcal{Y}^2_s,Y^{2,t}_s,Z^{2,t}_s)|\,ds\right)^2\right]\Biggr)
\end{align*}
for any $t\in[0,T]$. Then Gronwall's inequality yields that
\begin{align}\label{theorem stability estimate 2}
	\nonumber&\sup_{s\in[0,T]}\mathbb{E}\left[\left|\mathcal{Y}^1_s-\mathcal{Y}^2_s\right|^2\right]\\
	\nonumber&\leq C\Biggl(\mathbb{E}\left[\sup_{s\in[0,T]}\left|X^1_s-X^2_s\right|^2\right]\\
	&\hspace{1.5cm}+\sup_{t\in[0,T]}\mathbb{E}\left[\left|\delta G(t,\mathcal{X}^2_{t,T})\right|^2+\left(\int^T_t|\delta F(t,s\mathcal{X}^2_{t,s},\mathcal{Y}^2_s,Y^{2,t}_s,Z^{2,t}_s)|\,ds\right)^2\right]\Biggr).
\end{align}
On the other hand, by the stability estimate of solutions of SDEs, we have
\begin{align}\label{theorem stability estimate 3}
	\nonumber&\mathbb{E}\left[\sup_{s\in[0,T]}\left|X^1_s-X^2_s\right|^2\right]\\
	\nonumber&\leq C\Biggl(\left|x^1-x^2\right|^2+\mathbb{E}\left[\left(\int^T_0|B(s,X^2_s,\mathcal{Y}^1_s)-B(s,X^2_s,\mathcal{Y}^2_s)|\,ds\right)^2\right]\\
	\nonumber&\hspace{1cm}+\mathbb{E}\left[\int^T_0\left|\Sigma(s,X^2_s,\mathcal{Y}^1_s)-\Sigma(s,X^2_s,\mathcal{Y}^2_s)\right|^2\,ds\right]\Biggr)\displaybreak[1]\\
	\nonumber&\leq C\Biggl(\left|x^1-x^2\right|^2+\mathbb{E}\left[\left(\int^T_0|\delta B(s,X^2_s,\mathcal{Y}^2_s)|\,ds\right)^2+\int^T_0\left|\delta\Sigma(s,X^2_s,\mathcal{Y}^2_s)\right|^2\,ds\right]\\
	&\hspace{1cm}+\mathbb{E}\left[\int^T_0\left|\mathcal{Y}^1_s-\mathcal{Y}^2_s\right|^2\,ds\right]\Biggr).
\end{align}
Combining (\ref{theorem stability estimate 2}) and (\ref{theorem stability estimate 3}), we obtain
\begin{align*}
	&\sup_{s\in[0,T]}\mathbb{E}\left[\left|\mathcal{Y}^1_s-\mathcal{Y}^2_s\right|^2\right]\\
	&\leq C\Biggl(\int^T_0\mathbb{E}\left[\left|\mathcal{Y}^1_s-\mathcal{Y}^2_s\right|^2\right]\,ds+\left|x^1-x^2\right|^2\\
	&\hspace{1.5cm}+\mathbb{E}\left[\left(\int^T_0|\delta B(s,X^2_s,\mathcal{Y}^2_s)|\,ds\right)^2+\int^T_0\left|\delta\Sigma(s,X^2_s,\mathcal{Y}^2_s)\right|^2\,ds\right]\\
	&\hspace{1.5cm}+\sup_{t\in[0,T]}\mathbb{E}\left[\left|\delta G(t,\mathcal{X}^2_{t,T})\right|^2+\left(\int^T_t|\delta F(t,s,\mathcal{X}^2_{t,s},\mathcal{Y}^2_s,Y^{2,t}_s,Z^{2,t}_s)|\,ds\right)^2\right]\Biggr).
\end{align*}
Since $\int^T_0\mathbb{E}\left[\left|\mathcal{Y}^1_s-\mathcal{Y}^2_s\right|^2\right]\,ds\leq T\sup_{s\in[0,T]}\mathbb{E}\left[\left|\mathcal{Y}^1_s-\mathcal{Y}^2_s\right|^2\right]$, if $T_1=T_1(L_1)>0$ is small and $T\leq T_1$, it holds that
\begin{align*}
	&\sup_{s\in[0,T]}\mathbb{E}\left[\left|\mathcal{Y}^1_s-\mathcal{Y}^2_s\right|^2\right]\\
	&\leq C\Biggl(\left|x^1-x^2\right|^2+\mathbb{E}\left[\left(\int^T_0|\delta B(s,X^2_s,\mathcal{Y}^2_s)|\,ds\right)^2+\int^T_0\left|\delta\Sigma(s,X^2_s,\mathcal{Y}^2_s)\right|^2\,ds\right]\\
	&\hspace{1.5cm}+\sup_{t\in[0,T]}\mathbb{E}\left[\left|\delta G(t,\mathcal{X}^2_{t,T})\right|^2+\left(\int^T_t|\delta F(t,s,\mathcal{X}^2_{t,s},\mathcal{Y}^2_s,Y^{2,t}_s,Z^{2,t}_s)|\,ds\right)^2\right]\Biggr).
\end{align*}
Inserting this inequality to (\ref{theorem stability estimate 1}) and (\ref{theorem stability estimate 3}), we obtain (\ref{theorem stability estimate}). By considering $x^2=0$ and $B^2,\Sigma^2,F^2,G^2=0$, we easily see that the estimate (\ref{theorem estimate}) holds.
\end{proof}

%%%%%%%%%%%%%%%%%%%%%%%%%%%%%%%%%%%%%%%%%%%%%%%%%%%%%%%%%%%%%%%%%%%%%%%%%%%%%%%%%%%%%%%%%%%%%%
%%%% Section. 3
%%%%%%%%%%%%%%%%%%%%%%%%%%%%%%%%%%%%%%%%%%%%%%%%%%%%%%%%%%%%%%%%%%%%%%%%%%%%%%%%%%%%%%%%%%%%%%

\section{Approximation of the equilibrium solution}\label{section approximation}

\paragraph{}
\ \,In this section, we introduce \emph{discretized equilibrium solutions} of FFBSDE$(T)$ which approximate the original equilibrium solution in an appropriate sense. In a special case, discretized equilibrium solutions turn out to be adapted solutions of classical forward-backward SDEs.
\par
Note that the system (\ref{flow of FBSDE}) can be rewritten as the equations
\begin{equation}\label{flow of FBSDE 2}
\begin{cases}
	dX_s=B(s,X_s,\mathcal{Y}_s)\,ds+\Sigma(s,X_s,\mathcal{Y}_s)\,dW_s,\ s\in[0,T],\\
	dY^t_s=-F(t,s,X_t,\mathbb{E}_t[X_s],X_s,\mathcal{Y}_s,Y^t_s,Z^t_s)\,ds+Z^t_s\,dW_s,\ s\in[t,T],\\
	X_0=x,\ Y^t_T=G(t,X_t,\mathbb{E}_t[X_T],X_T),\ t\in[0,T],
\end{cases}
\end{equation}
with the condition
\begin{equation}\label{diagonal condition}
	\mathcal{Y}_s=Y^s_s,\ \text{Leb}\otimes\mathbb{P}\text{-a.e.}\,(s,\omega)\in[0,T]\times\Omega.
\end{equation}
The system (\ref{flow of FBSDE 2}) consists of a continuum of equations with parameter $t\in[0,T]$ and all the equations are coupled via (\ref{diagonal condition}). Now we consider a discretized version of the condition (\ref{diagonal condition}) so that the system (\ref{flow of FBSDE 2}) reduces to a coupled system consisting of only finitely many equations. To do so, let $\mathcal{P}[0,T]$ be the set of finite partitions $\Pi$ of the interval $[0,T]$; $\Pi=\{t_k|k=0,1,\dots,N\}$, $0=t_0<t_1<\dots<t_N=T$. $\|\Pi\|:=\max_{k=1,\dots,N}(t_k-t_{k-1})$ denotes the mesh size of $\Pi$. For each $\Pi\in\mathcal{P}[0,T]$, we consider the following system:
\begin{equation}\label{discrete flow of FBSDE}
\begin{cases}
	dX^\Pi_s=B(s,X^\Pi_s,\mathcal{Y}^\Pi_s)\,ds+\Sigma(s,X^\Pi_s,\mathcal{Y}^\Pi_s)\,dW_s,\ s\in[0,T],\\
		dY^{\Pi,k}_s=-F(t_{k-1},s,X^\Pi_{t_{k-1}},\mathbb{E}_{t_{k-1}}[X^\Pi_s],X^\Pi_s,\mathcal{Y}^\Pi_s,Y^{\Pi,k}_s,Z^{\Pi,k}_s)\,ds+Z^{\Pi,k}_s\,dW_s,\ s\in[t_{k-1},T],\\
		X^\Pi_0=x,\ Y^{\Pi,k}_T=G(t_{k-1},X^\Pi_{t_{k-1}},\mathbb{E}_{t_{k-1}}[X^\Pi_T],X^\Pi_T),\ 
k=1,\dots,N,\\
	\mathcal{Y}^\Pi_s=\sum^{N-1}_{j=1}Y^{\Pi,j}_s\1_{[t_{j-1},t_j)}(s)+Y^{\Pi,N}_s\1_{[t_{N-1},t_N]}(s),\ s\in[0,T].
\end{cases}
\end{equation}

% Definition
\begin{defi}\label{def Pi-equilibrium solution}
For each $T>0$ and $\Pi=\{t_k|k=0,1,\dots,N\}\in\mathcal{P}[0,T]$, we call a triplet $(X^\Pi,Y^\Pi,Z^\Pi)$ a \emph{$\Pi$-equilibrium solution} of  FFBSDE$(T)$ if $X^\Pi\in L^2_\mathbb{F}(\Omega;C([0,T];\mathbb{R}^n))$, $(Y^\Pi,Z^\Pi)=\{(Y^{\Pi,k},Z^{\Pi,k})\}_{k=1,\dots,N}$ with $(Y^{\Pi,k},Z^{\Pi,k})\in L^2_\mathbb{F}(\Omega;C([t_{k-1},T];\mathbb{R}^m))\times L^2_\mathbb{F}(t_{k-1},T;\mathbb{R}^{m\times d})$, $k=1,\dots,N$, and they satisfy equations in (\ref{discrete flow of FBSDE}) in the usual It\^{o} sense, for each $k=1,\dots,N$.
\end{defi}

By the same argument as Theorem \ref{theorem small-time solvability}, we obtain the following proposition.

% Proposition
\begin{prop}\label{theorem small-time solvability Pi}
Suppose that Assumptions (A1)--(A4) hold and let $T_0=T_0(L)>0$ be the constant appearing in Theorem \ref{theorem small-time solvability}.  Then for any $T\leq T_0$ and any $\Pi\in\mathcal{P}[0,T]$ there exists a unique $\Pi$-equilibrium solution $(X^\Pi,Y^\Pi,Z^\Pi)$ of FFBSDE$(T)$.
\end{prop}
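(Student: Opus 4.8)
The plan is to reproduce, almost verbatim, the contraction mapping argument of the proof of Theorem~\ref{theorem small-time solvability}, with the continuum of BSDEs replaced by the finite family indexed by $k=1,\dots,N$ and the diagonal condition (\ref{diagonal condition}) replaced by the piecewise-constant coupling in the last line of (\ref{discrete flow of FBSDE}). Fix $\Pi=\{t_k\}_{k=0}^{N}\in\mathcal{P}[0,T]$ with $0<T\le T_0$. On the Banach space $L^\infty_\mathbb{F}(0,T;L^2(\Omega;\mathbb{R}^m))$ with the norm $\|\cdot\|_\infty$, I would define a map $\Phi^\Pi$ as follows: given $\mathcal{Y}$, first let $X\in L^2_\mathbb{F}(\Omega;C([0,T];\mathbb{R}^n))$ be the unique solution of the forward SDE in (\ref{discrete flow of FBSDE}) driven by $B(s,X_s,\mathcal{Y}_s)$, $\Sigma(s,X_s,\mathcal{Y}_s)$ with $X_0=x$; next, for each $k=1,\dots,N$, let $(Y^{\Pi,k},Z^{\Pi,k})\in L^2_\mathbb{F}(\Omega;C([t_{k-1},T];\mathbb{R}^m))\times L^2_\mathbb{F}(t_{k-1},T;\mathbb{R}^{m\times d})$ be the unique adapted solution of the BSDE on $[t_{k-1},T]$ with driver $F(t_{k-1},s,X_{t_{k-1}},\mathbb{E}_{t_{k-1}}[X_s],X_s,\mathcal{Y}_s,\cdot,\cdot)$ and terminal value $G(t_{k-1},X_{t_{k-1}},\mathbb{E}_{t_{k-1}}[X_T],X_T)$, both of which exist and are unique by the standard theory of SDEs and BSDEs under Assumptions~(A1)--(A4); finally set
\[
	\Phi^\Pi(\mathcal{Y})_s:=\sum_{j=1}^{N-1}Y^{\Pi,j}_s\1_{[t_{j-1},t_j)}(s)+Y^{\Pi,N}_s\1_{[t_{N-1},t_N]}(s),\qquad s\in[0,T].
\]
In contrast with Theorem~\ref{theorem small-time solvability}, no analogue of Lemma~\ref{measurability lemma} is needed here: $\Phi^\Pi(\mathcal{Y})$ is a finite sum of progressively measurable processes restricted to subintervals, hence itself progressively measurable, and it lies in $L^\infty_\mathbb{F}(0,T;L^2(\Omega;\mathbb{R}^m))$ by the a priori bound (\ref{measurability lemma estimate 3}) applied with $t=t_{k-1}$ on each interval $[t_{k-1},T]$.

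Next I would show that $\Phi^\Pi$ is a contraction whenever $T\le T_0$, where $T_0=T_0(L)$ is the constant of Theorem~\ref{theorem small-time solvability}. For inputs $\mathcal{Y}^1,\mathcal{Y}^2$, with associated $X^i$ and $(Y^{i,\Pi,k},Z^{i,\Pi,k})$, the BDG and Gronwall inequalities give, exactly as in Theorem~\ref{theorem small-time solvability}, $\mathbb{E}[\sup_{s\in[0,T]}|X^1_s-X^2_s|^2]\le C\,\mathbb{E}[\int_0^T|\mathcal{Y}^1_s-\mathcal{Y}^2_s|^2\,ds]$ with $C=C(L)$; then the standard stability estimate for BSDEs, applied on each interval $[t_{k-1},T]$ with the single Lipschitz constant $L$, yields a constant $C=C(L)$ independent of $k$, $N$ and $\Pi$ with
\begin{align*}
	\max_{k=1,\dots,N}\mathbb{E}\Big[\sup_{s\in[t_{k-1},T]}\big|Y^{1,\Pi,k}_s-Y^{2,\Pi,k}_s\big|^2\Big]
	&\le C\,\mathbb{E}\Big[\sup_{s\in[0,T]}\big|X^1_s-X^2_s\big|^2+\int_0^T\big|\mathcal{Y}^1_s-\mathcal{Y}^2_s\big|^2\,ds\Big]\\
	&\le C\,\mathbb{E}\Big[\int_0^T\big|\mathcal{Y}^1_s-\mathcal{Y}^2_s\big|^2\,ds\Big].
\end{align*}
Since $\Phi^\Pi(\mathcal{Y}^i)_s=Y^{i,\Pi,k}_s$ on $[t_{k-1},t_k)$, this gives $\|\Phi^\Pi(\mathcal{Y}^1)-\Phi^\Pi(\mathcal{Y}^2)\|_\infty^2\le C\,\mathbb{E}[\int_0^T|\mathcal{Y}^1_s-\mathcal{Y}^2_s|^2\,ds]\le CT\,\|\mathcal{Y}^1-\mathcal{Y}^2\|_\infty^2$. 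As $C$ depends only on $L$, the very same threshold $T_0=T_0(L)$ as in Theorem~\ref{theorem small-time solvability} makes $\Phi^\Pi$ a contraction for \emph{every} $\Pi\in\mathcal{P}[0,T]$ when $T\le T_0$. Its unique fixed point $\mathcal{Y}^\Pi$, together with the corresponding $X^\Pi$ and $(Y^{\Pi,k},Z^{\Pi,k})$, is then a $\Pi$-equilibrium solution, and uniqueness follows because any $\Pi$-equilibrium solution $(X^\Pi,Y^\Pi,Z^\Pi)$ produces, through its piecewise-constant process $\sum_{j=1}^{N-1}Y^{\Pi,j}_s\1_{[t_{j-1},t_j)}(s)+Y^{\Pi,N}_s\1_{[t_{N-1},t_N]}(s)$, a fixed point of $\Phi^\Pi$.

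The computation is essentially mechanical; the only point requiring real attention is the uniformity of the constants with respect to the partition $\Pi$. Each BSDE in (\ref{discrete flow of FBSDE}) has a driver with the same Lipschitz constant $L$ regardless of $k$, so the constants in the SDE and BSDE stability estimates depend only on $L$ and $T$ (and may be taken uniform over $T\le T_0$) and do not deteriorate as $\|\Pi\|\to 0$ or $N\to\infty$. This is precisely what allows a single $T_0$ to serve simultaneously for all partitions, which is exactly what will be needed for the approximation results in this section.
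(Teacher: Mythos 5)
Your proposal is correct and is exactly the argument the paper intends: the paper's ``proof'' of this proposition is the single sentence that it follows by the same argument as Theorem~\ref{theorem small-time solvability}, and you have carried out precisely that contraction-mapping argument, correctly observing that Lemma~\ref{measurability lemma} is unnecessary here and that the constants, hence the threshold $T_0(L)$, are uniform in the partition $\Pi$.
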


The next theorem says that the discretized equilibrium solutions of a flow of forward-backward SDEs approximate the original equilibrium solution in an appropriate sense.

%Theorem
\begin{theo}\label{theorem approximation}
Suppose that Assumptions (A1)--(A4) hold and let $\tilde{T}>0$ be a constant for which the assertions of Theorems \ref{theorem small-time solvability}, \ref{theorem stability} and Proposition \ref{theorem small-time solvability Pi} hold. For each $T\leq\tilde{T}$ and $\Pi\in\mathcal{P}[0,T]$, denote by $(X,Y,Z)$ (resp.\ $(X^\Pi,Y^\Pi,Z^\Pi)$) the equilibrium solution (resp.\ $\Pi$-equilibrium solution) of FFBSDE$(T)$ and let $\mathcal{Y}_s=Y^s_s$, $\mathcal{Y}^\Pi_s=\sum^{N-1}_{k=1}Y^{\Pi,k}_s\1_{[t_{k-1},t_k)}(s)+Y^{\Pi,N}_s\1_{[t_{N-1},t_N]}(s)$, $s\in[0,T]$. Then there exists a constant $T_2\leq \tilde{T}$ which depends only on $L$ such that, for any $T\leq T_2$, it holds that
\begin{equation}\label{theorem Pi limit}
	\lim_{\|\Pi\|\to0}\left(\mathbb{E}\left[\sup_{s\in[0,T]}\left|X^\Pi_s-X_s\right|^2\right]+\sup_{s\in[0,T]}\mathbb{E}\left[\left|\mathcal{Y}^\Pi_s-\mathcal{Y}_s\right|^2\right]\right)=0.
\end{equation}
Furthermore, If $F$ and $G$ are independent of the $(\xi,\bar{x})$-variables, then there exists a constant $C>0$ which depends only on $L$ such that, for any $T\leq T_2$ and any $\Pi\in\mathcal{P}[0,T]$, it holds that
\begin{equation}\label{theorem Pi estimate}
	\mathbb{E}\left[\sup_{s\in[0,T]}\left|X^\Pi_s-X_s\right|^2\right]+\sup_{s\in[0,T]}\mathbb{E}\left[\left|\mathcal{Y}^\Pi_s-\mathcal{Y}_s\right|^2\right]\leq C\left(R+|x|^2\right)\rho(\|\Pi\|)^2.
\end{equation}
\end{theo}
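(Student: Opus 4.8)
The plan is to treat the $\Pi$-equilibrium solution as the equilibrium solution of a \emph{perturbed} FFBSDE and apply the stability estimate of Theorem~\ref{theorem stability}. The key observation is that the $\Pi$-equilibrium solution satisfies exactly the system (\ref{flow of FBSDE}) but with coefficients $F$ and $G$ in which the arguments $t$, $X_t$, $\mathbb{E}_t[X_s]$, $\mathbb{E}_t[X_T]$ are replaced by their values at the grid point $t_{k-1}$ just below $t$; equivalently, writing $\pi(s):=t_{k-1}$ for $s\in[t_{k-1},t_k)$ (and $\pi(T):=t_{N-1}$), the $\Pi$-equilibrium solution is the equilibrium solution of FFBSDE$(T)$ with coefficients $x$, $B$, $\Sigma$ and
\[
	F^\Pi(t,s,\xi,\bar{x},x,\eta,y,z):=F\bigl(\pi(t),s,X^\Pi_{\pi(t)},\mathbb{E}_{\pi(t)}[X^\Pi_s],x,\eta,y,z\bigr),\quad
	G^\Pi(t,\xi,\bar{x},x):=G\bigl(\pi(t),X^\Pi_{\pi(t)},\mathbb{E}_{\pi(t)}[X^\Pi_T],x\bigr),
\]
so that the $\xi$- and $\bar{x}$-slots are simply not used. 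This is a slightly delicate point because $F^\Pi$, $G^\Pi$ depend on $X^\Pi$ itself; but for the purpose of applying Theorem~\ref{theorem stability} we only need the right-hand side of (\ref{theorem stability estimate}) evaluated \emph{along the solution of the unperturbed system}, i.e. along $(X,Y,Z)$ with $\mathcal{Y}_s=Y^s_s$. Thus I would instead run the stability estimate with coefficient~$1$ being the genuine $(x,B,\Sigma,F,G)$ and coefficient~$2$ being a frozen-argument surrogate, and control the discrepancy terms $\delta F$, $\delta G$ evaluated at $(X^1,Y^1,Z^1)=(X,Y,Z)$ directly.

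Concretely, the steps are: (1) Apply Theorem~\ref{theorem stability} with the pair of coefficient families being the original $F,G$ and the ``shifted'' versions $F(\pi(\cdot),s,\ldots)$, $G(\pi(\cdot),\ldots)$, noting that both satisfy (A1)--(A4) with the same $L$, and identify the $\Pi$-equilibrium solution with the equilibrium solution of the shifted system. Then (\ref{theorem stability estimate}) bounds the left side of (\ref{theorem Pi limit}) by $C$ times
\[
	\sup_{t\in[0,T]}\mathbb{E}\Bigl[\bigl|\delta G(t,\mathcal{X}^2_{t,T})\bigr|^2+\Bigl(\int^T_t|\delta F(t,s,\mathcal{X}^2_{t,s},\mathcal{Y}^2_s,Y^{2,t}_s,Z^{2,t}_s)|\,ds\Bigr)^2\Bigr],
\]
where $\delta F$, $\delta G$ are the differences between $F$, $G$ and their $\pi$-shifted versions, evaluated along the solution of one of the two systems. (2) Bound $\delta G$ and $\delta F$ using Assumption~(A4): the $t$-shift gives $\rho(|t-\pi(t)|)\le\rho(\|\Pi\|)$ times a linear-growth factor, and the shifts $X_t\rightsquigarrow X_{\pi(t)}$, $\mathbb{E}_t[X_\cdot]\rightsquigarrow\mathbb{E}_{\pi(t)}[X_\cdot]$ are handled by the Lipschitz property (A3) in the $\xi$- and $\bar x$-slots exactly as in the proof of Lemma~\ref{measurability lemma}: $\mathbb{E}[|X_t-X_{\pi(t)}|^2]$ is small by continuity of $s\mapsto X_s$ in $L^2$, and $\mathbb{E}[|\mathbb{E}_t[X_r]-\mathbb{E}_{\pi(t)}[X_r]|^2]=\int^t_{\pi(t)}\mathbb{E}[|\chi^r_u|^2]\,du$ using the martingale representation $X_r=\mathbb{E}[X_r]+\int^r_0\chi^r_u\,dW_u$, whose integral over $(t,r)$ is controlled by $\int^T_0\int^T_u\mathbb{E}[|\chi^r_u|^2]\,dr\,du<\infty$. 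Since the linear-growth factor is square-integrable uniformly by (\ref{theorem estimate}) and (\ref{measurability lemma estimate 3}), these quantities tend to $0$ as $\|\Pi\|\to0$, which gives (\ref{theorem Pi limit}). (3) For (\ref{theorem Pi estimate}): when $F,G$ do not depend on $(\xi,\bar x)$, the only discrepancy comes from the explicit $t$-argument, so (A4) yields $|\delta G|^2+(\int|\delta F|\,ds)^2\le C\rho(\|\Pi\|)^2(1+\sup_s|X_s|^2+\sup_s|\mathcal{Y}_s|^2+\int^T_0(|Y^{2,t}_s|^2+|Z^{2,t}_s|^2)\,ds)$, and taking expectations and using (\ref{theorem estimate}) bounds the right side of (\ref{theorem stability estimate}) by $C(R+|x|^2)\rho(\|\Pi\|)^2$.

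A couple of technical points must be checked carefully. First, one needs to verify rigorously that the $\Pi$-equilibrium solution really is the equilibrium solution of the shifted FFBSDE — in particular that the diagonal term $\mathcal{Y}^\Pi_s=Y^{\Pi,k}_s$ for $s\in[t_{k-1},t_k)$ coincides (up to Leb$\otimes\mathbb{P}$-null sets) with the progressively measurable diagonal $(\tilde Y^s_s)$ produced by Lemma~\ref{measurability lemma} for the shifted coefficients; this holds because for $s$ in a fixed subinterval $[t_{k-1},t_k)$ the shifted BSDE with parameter $s$ has the \emph{same} data for all such $s$ up to the terminal-time and generator $t$-slot being frozen at $t_{k-1}$ — so $\tilde Y^s_s=Y^{\Pi,k}_s$ by uniqueness. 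Second, $T_2$ is taken to be $\min\{\tilde T, T_1\}$ with $T_1$ from Theorem~\ref{theorem stability}. I expect the main obstacle to be exactly this bookkeeping identification in step~(1): making precise the sense in which the finite system (\ref{discrete flow of FBSDE}) equals a continuum FFBSDE with piecewise-constant-in-$t$ coefficients, so that Theorem~\ref{theorem stability}, which is stated for genuine FFBSDEs with coefficients satisfying (A1)--(A4), applies verbatim. Once that identification is in place, steps (2) and (3) are routine applications of (A3)--(A4) together with the martingale-representation estimate already used in Lemma~\ref{measurability lemma}.
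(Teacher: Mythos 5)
Your overall strategy --- reduce to a BSDE stability estimate and split the discrepancy into a $t$-shift part controlled by (A4) and an argument-shift part $X_t\to X_{\pi(t)}$, $\mathbb{E}_t\to\mathbb{E}_{\pi(t)}$ controlled by (A3) together with the martingale-representation argument of Lemma~\ref{measurability lemma} --- is essentially the paper's, which splits the stability bound into three terms $I_1,I_2,I_3$ of exactly this kind. The gap is in your step (2): you account only for the shifts $t\to\pi(t)$ and $X_t\to X_{\pi(t)}$, whereas the surrogate coefficients you define carry $X^\Pi_{\pi(t)}$ and $\mathbb{E}_{\pi(t)}[X^\Pi_\cdot]$, not $X_{\pi(t)}$ and $\mathbb{E}_{\pi(t)}[X_\cdot]$. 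Consequently the residuals $\delta F,\delta G$ in (\ref{theorem stability estimate}) --- no matter along which of the two solutions you evaluate them --- necessarily contain the further discrepancies $|X^\Pi_{\pi(t)}-X_{\pi(t)}|$, $|\mathbb{E}_{\pi(t)}[X^\Pi_s-X_s]|$ and $|\mathcal{Y}^\Pi_s-\mathcal{Y}_s|$ (the paper's term $I_3$). These are not small by continuity or by (A4); they are of the same order as the quantity being estimated, so invoking Theorem~\ref{theorem stability} as a black box yields an inequality of the form $D\le C(\text{small})+C'D$ where $C'$ is the fixed constant of that theorem and cannot be absorbed. The paper closes the loop by observing that $I_3\le CT\sup_t\mathbb{E}\left[|\mathcal{Y}_t-\mathcal{Y}^\Pi_t|^2\right]$ (via the forward estimate (\ref{x y estimate})) and then choosing $T_2$ so small that this term can be moved to the left-hand side; this extra absorption is precisely why the theorem needs a further smallness condition $T\le T_2$ beyond $T\le\min\{\tilde T,T_1\}$, and it forces you to reopen the Gronwall argument rather than cite Theorem~\ref{theorem stability} verbatim.

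Two smaller points. First, your surrogate coefficients are discontinuous in $t$ at the grid points, so they satisfy (A4) only with a modulus of the form $\rho(\cdot+\|\Pi\|)$, which does not vanish at $0$; this is harmless here because (A4) enters Theorem~\ref{theorem stability} only through the existence hypothesis (supplied by Proposition~\ref{theorem small-time solvability Pi}) and through the measurability of the diagonal (trivial for a finite system), but it means the hypotheses of Theorem~\ref{theorem stability} are not literally met and the point must be addressed. Second, your identification of the $\Pi$-equilibrium solution with the equilibrium solution of the shifted flow is sound as sketched; the paper sidesteps this bookkeeping entirely by comparing the BSDEs for $Y^t$ and $Y^{\Pi,k}$ directly for each fixed $t$, which is the more economical route. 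Your treatment of $I_1$ and $I_2$ (the limit (\ref{theorem Pi limit}) via uniform continuity and the rate (\ref{theorem Pi estimate}) via (A4) and (\ref{theorem estimate}) when $F,G$ do not depend on $(\xi,\bar x)$) matches the paper once the absorption step above is supplied.
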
 

% Proof
\begin{proof}
As before, we denote by $C>0$ a constant which depends only on $L$ and is allowed to vary from line to line. Let $0<T\leq\tilde{T}$ and $\Pi\in\mathcal{P}[0,T]$ with $\Pi=\{t_k|k=0,1,\dots,N\}$, $0=t_0<t_1<\dots<t_N=T$. In this proof, we use the notations
\begin{equation*}
	\mathcal{X}_{t,s}=(X_t,\mathbb{E}_t[X_s],X_s),\ \mathcal{X}^\Pi_{t,s}=(X^\Pi_t,\mathbb{E}_t[X^\Pi_s],X^\Pi_s),\ (t,s)\in\Delta[0,T].
\end{equation*}
Fix an arbitrary $t\in[0,T]$ and let $k$ be the number such that $t\in[t_{k-1},t_k)$; when $t=T$, let $k=N$. Then the standard estimate of solutions of BSDEs yields that
\begin{align*}
	&\mathbb{E}\left[\sup_{s\in[t,T]}\left|Y^t_s-Y^{\Pi,k}_s\right|^2\right]\\
	&\leq C\mathbb{E}\Biggl[\left|G(t,\mathcal{X}_{t,T})-G(t_{k-1},\mathcal{X}^\Pi_{t_{k-1},T})\right|^2\\
	&\hspace{2cm}+\left(\int^T_t|F(t,s,\mathcal{X}_{t,s},\mathcal{Y}_s,Y^t_s,Z^t_s)-F(t_{k-1},s,\mathcal{X}^\Pi_{t_{k-1},s},\mathcal{Y}^\Pi_s,Y^t_s,Z^t_s)|\,ds\right)^2\Biggr]\displaybreak[1]\\
% I_1
	&\leq C\Biggl\{\mathbb{E}\Biggl[\left|G(t,\mathcal{X}_{t,T})-G(t_{k-1},\mathcal{X}_{t,T})\right|^2\\
	&\hspace{2cm}+\Biggl(\int^T_t|F(t,s,\mathcal{X}_{t,s},\mathcal{Y}_s,Y^t_s,Z^t_s)-F(t_{k-1},s,\mathcal{X}_{t,s},\mathcal{Y}_s,Y^t_s,Z^t_s)|\,ds\Biggr)^2\Biggr]\\
% I_2
	&\hspace{1cm}+\mathbb{E}\Biggl[\left|G(t_{k-1},\mathcal{X}_{t,T})-G(t_{k-1},\mathcal{X}_{t_{k-1},T})\right|^2\\
	&\hspace{2cm}+\Biggl(\int^T_t|F(t_{k-1},s,\mathcal{X}_{t,s},\mathcal{Y}_s,Y^t_s,Z^t_s)-F(t_{k-1},s,\mathcal{X}_{t_{k-1},s},\mathcal{Y}_s,Y^t_s,Z^t_s)|\,ds\Biggr)^2\Biggr]\\
% I_3
	&\hspace{1cm}+\mathbb{E}\Biggl[\left|G(t_{k-1},\mathcal{X}_{t_{k-1},T})-G(t_{k-1},\mathcal{X}^\Pi_{t_{k-1},T})\right|^2\\
	&\hspace{2cm}+\Biggl(\int^T_t|F(t_{k-1},s,\mathcal{X}_{t_{k-1},s},\mathcal{Y}_s,Y^t_s,Z^t_s)-F(t_{k-1},s,\mathcal{X}^\Pi_{t_{k-1},s},\mathcal{Y}^\Pi_s,Y^t_s,Z^t_s)|\,ds\Biggr)^2\Biggr]\Biggr\}\\
	&=:C\left(I_1(t;\Pi)+I_2(t;\Pi)+I_3(t;\Pi)\right).
\end{align*}
Since $\mathcal{Y}_t=Y^t_t$ and $\mathcal{Y}^{\Pi}_t=Y^{\Pi,k}_t$, we obtain
\begin{equation}\label{I_123}
	\mathbb{E}\left[\left|\mathcal{Y}_t-\mathcal{Y}^\Pi_t\right|^2\right]\leq C(I_1(t;\Pi)+I_2(t;\Pi)+I_3(t,\Pi)).
\end{equation}
Assumption~(A3) yields that
\begin{align}\label{I_3 estimate}
	\nonumber I_3(t;\Pi)&\leq C\mathbb{E}\left[\sup_{s\in[t,T]}\left|X_s-X^\Pi_s\right|^2+\int^T_t\left|\mathcal{Y}_s-\mathcal{Y}^\Pi_s\right|^2\,ds\right]\\
	\nonumber&\leq C\mathbb{E}\left[\int^T_0\left|\mathcal{Y}_s-\mathcal{Y}^\Pi_s\right|^2\,ds\right]\\
	&\leq CT\sup_{t\in[0,T]}\mathbb{E}\left[\left|\mathcal{Y}_t-\mathcal{Y}^\Pi_t\right|^2\right],
\end{align}
where, in the second inequality, we used the estimate
\begin{equation}\label{x y estimate}
	\mathbb{E}\left[\sup_{s\in[0,T]}\left|X_s-X^\Pi_s\right|^2\right]\leq C\mathbb{E}\left[\int^T_0\left|\mathcal{Y}_s-\mathcal{Y}^\Pi_s\right|^2\,ds\right],
\end{equation}
which can be easily shown by using the standard estimate of solutions of SDEs. By (\ref{I_123}) and (\ref{I_3 estimate}), if $T_2=T_2(L)>0$ is sufficiently small and $T\leq T_2$, then it holds that
\begin{equation}\label{I_12}
	\sup_{t\in[0,T]}\mathbb{E}\left[\left|\mathcal{Y}_t-\mathcal{Y}^\Pi_t\right|^2\right]\leq C\left(\sup_{t\in[0,T]}I_1(t;\Pi)+\sup_{t\in[0,T]}I_2(t;\Pi)\right).
\end{equation}
By Assumption~(A4) and the estimate (\ref{theorem estimate}) (noting that $\mathcal{Y}_s=Y^s_s,\ s\in[0,T]$), the expectation $I_1(t;\Pi)$ (the difference with respect to the $t$-variable) can be estimated as follows:
\begin{align}\label{I_1 estimate}
	\nonumber I_1(t;\Pi)&\leq C\rho(t-t_{k-1})^2\left(1+\mathbb{E}\left[\sup_{s\in[t,T]}\left|X_s\right|^2+\int^T_t\left(\left|\mathcal{Y}_s\right|^2+\left|Y^t_s\right|^2+\left|Z^t_s\right|^2\right)\,ds\right]\right)\\
	\nonumber&\leq C\left(R+|x|^2\right)\rho(t-t_{k-1})^2\\
	&\leq C\left(R+|x|^2\right)\rho(\|\Pi\|)^2,
\end{align}
in particular,
\begin{equation}\label{I_1 limit}
	\lim_{\|\Pi\|\to0}\sup_{t\in[0,T]}I_1(t,\Pi)=0.
\end{equation}
By Assumption~(A3), the expectation $I_2(t;\Pi)$ (the difference with respect to the $(\xi,\bar{x})$-variables) can be estimated as follows:
\begin{equation*}
	I_2(t;\Pi)\leq C\mathbb{E}\left[\left|X_t-X_{t_{k-1}}\right|^2+\left|\mathbb{E}_t[X_T]-\mathbb{E}_{t_{k-1}}[X_T]\right|^2+\int^T_t\left|\mathbb{E}_t[X_s]-\mathbb{E}_{t_{k-1}}[X_s]\right|^2\,ds\right].
\end{equation*}
Note that $k$ depends on $t\in[0,T]$ and $t-t_{k-1}\leq\|\Pi\|$. By the same argument as in the proof of Lemma \ref{measurability lemma}, we see that the right hand side of the above inequality tends to zero as the mesh size $\|\Pi\|$ tends to zero uniformly in $t\in[0,T]$. Hence, we have
\begin{equation}\label{I_2 limit}
	\lim_{\|\Pi\|\to0}\sup_{t\in[0,T]}I_2(t,\Pi)=0.
\end{equation}
Thus, if $T\leq T_2$, By (\ref{I_12}), (\ref{I_1 limit}) and (\ref{I_2 limit}), we obtain
\begin{equation}\label{Pi limit}
	\lim_{\|\Pi\|\to0}\sup_{t\in[0,T]}\mathbb{E}\left[\left|\mathcal{Y}_t-\mathcal{Y}^\Pi_t\right|^2\right]=0.
\end{equation}
If $F$ and $G$ do not depend on the $(\xi,\bar{x})$-variables, then $I_2(t;\Pi)=0$, $t\in[0,T]$, and hence (\ref{I_12}) and $(\ref{I_1 estimate})$ yield that
\begin{equation}\label{Pi estimate}
	\sup_{t\in[0,T]}\mathbb{E}\left[\left|\mathcal{Y}_t-\mathcal{Y}^\Pi_t\right|^2\right]\leq C\left(R+|x|^2\right)\rho(\|\Pi\|)^2.
\end{equation}
From (\ref{x y estimate}) and (\ref{Pi limit})--(\ref{Pi estimate}), we obtain (\ref{theorem Pi limit}) and (\ref{theorem Pi estimate}). This completes the proof.
\end{proof}

% Remark
\begin{rem}
If $F$ and $G$ are independent of the $(\xi,\bar{x})$-variables, then for each $\Pi\in\mathcal{P}[0,T]$ the system (\ref{discrete flow of FBSDE}) reduces to
\begin{equation}\label{discrete flow of FBSDE 2}
\begin{cases}
	dX^\Pi_s=B(s,X^\Pi_s,\mathcal{Y}^\Pi_s)\,ds+\Sigma(s,X^\Pi_s,\mathcal{Y}^\Pi_s)\,dW_s,\ s\in[0,T],\\
		dY^{\Pi,k}_s=-F(t_{k-1},s,X^\Pi_s,\mathcal{Y}^\Pi_s,Y^{\Pi,k}_s,Z^{\Pi,k}_s)\,ds+Z^{\Pi,k}_s\,dW_s,\ s\in[t_{k-1},T],\\
		X^\Pi_0=x,\ Y^{\Pi,k}_T=G(t_{k-1},X^\Pi_T),\ 
k=1,\dots,N,\\
	\mathcal{Y}^\Pi_s=\sum^{N-1}_{j=1}Y^{\Pi,j}_s\1_{[t_{j-1},t_j)}(s)+Y^{\Pi,N}_s\1_{[t_{N-1},t_N]}(s),\ s\in[0,T].
\end{cases}
\end{equation}
Without loss of generality we extend $F$ to the mapping $F\colon[0,T]^2\times\mathbb{R}^n\times\mathbb{R}^m\times\mathbb{R}^m\times\mathbb{R}^{m\times d}\times\Omega\to\mathbb{R}^m$ by letting $F(t,s,\dots):=0$ for $0\leq s<t\leq T$. Then the system (\ref{discrete flow of FBSDE 2}) can be seen as a classical forward-backward SDEs:
\begin{equation}\label{classical FBSDE}
	\begin{cases}
	dX^\Pi_s=B^\Pi(s,X^\Pi_s,\bm{Y}^\Pi_s)\,ds+\Sigma^\Pi(s,X^\Pi_s,\bm{Y}^\Pi_s)\,dW_s,\\
	d\bm{Y}^\Pi_s=-\bm{F}^\Pi(s,X^\Pi_s,\bm{Y}^\Pi_s,\bm{Z}^\Pi_s)\,ds+\bm{Z}^\Pi_s\,dW_s,\ s\in[0,T],\\
	X^\Pi_0=x,\ \bm{Y}^\Pi_T=\bm{G}^\Pi(X^\Pi_T),
	\end{cases}
\end{equation}
where, $X^\Pi\in L^2_\mathbb{F}(\Omega;C([0,T];\mathbb{R}^n))$, $\bm{Y}^\Pi=\left(
    \begin{array}{c}
      Y^{\Pi,1}\\
      \vdots \\
      Y^{\Pi,N}
    \end{array}
  \right)\in L^2_\mathbb{F}(\Omega;C([0,T];\mathbb{R}^{mN}))$, and $\bm{Z}^\Pi=\left(
    \begin{array}{c}
      Z^{\Pi,1}\\
      \vdots \\
      Z^{\Pi,N}
    \end{array}
  \right)\in L^2_\mathbb{F}(0,T;\mathbb{R}^{(mN)\times d})$; the coefficients $B^\Pi,\Sigma^\Pi,\bm{F}^\Pi,\bm{G}^\Pi$ are defined by
\begin{align*}
	&B^\Pi(s,x,\bm{y}):=B(s,x,\phi^\Pi(s,\bm{y})),\ \Sigma^\Pi(s,x,\bm{y}):=\Sigma(s,x,\phi^\Pi(s,\bm{y})),\\
	&\bm{F}^\Pi(s,x,\bm{y},\bm{z}):=\left(
    \begin{array}{c}
      F(t_0,s,x,\phi^\Pi(s,\bm{y}),y^1,z^1) \\
      F(t_1,s,x,\phi^\Pi(s,\bm{y}),y^2,z^2)\\
      \vdots \\
      F(t_{N-1},s,x,\phi^\Pi(s,\bm{y}),y^N,z^N)
    \end{array}
  \right),
	\ \bm{G}^\Pi(x):=\left(
    \begin{array}{c}
      G(t_0,x)\\
      G(t_1,x)\\
      \vdots \\
      G(t_{N-1},x)
    \end{array}
  \right),\displaybreak[1]\ \shortintertext{with}
	&\phi^\Pi(s,\bm{y}):=\sum^{N-1}_{k=1}y^k\1_{[t_{k-1},t_k)}(s)+y^N\1_{[t_{N-1},t_N]}(s),
\end{align*}
for $s\in[0,T],\ x\in\mathbb{R}^n,\ \bm{y}=\left(
    \begin{array}{c}
      y^1\\
      \vdots \\
      y^N
    \end{array}
  \right)\in\mathbb{R}^{mN},\ \text{and}\ \bm{z}=\left(
    \begin{array}{c}
      z^1\\
      \vdots \\
      z^N
    \end{array}
  \right)\allowbreak\in\mathbb{R}^{(mN)\times d}$. In the above notations, it holds that $\mathcal{Y}^\Pi_s=\phi^\Pi(s,\bm{Y}^\Pi_s),\ s\in[0,T]$. Theorem~\ref{theorem approximation} says that, under our assumptions, the equilibrium solution of FFBSDE$(T)$ can be approximated by the adapted solutions of the classical forward-backward SDEs (\ref{classical FBSDE}) in the sense that the following estimate holds:
\begin{equation*}
	\mathbb{E}\left[\sup_{s\in[0,T]}|X^\Pi_s-X_s|^2\right]+\sup_{s\in[0,T]}\mathbb{E}\left[|\phi^\Pi(s,\bm{Y}^\Pi_s)-\mathcal{Y}_s|^2\right]\leq C\left(R+|x|^2\right)\rho(\|\Pi\|)^2
\end{equation*}
for each $\Pi\in\mathcal{P}[0,T]$.
\end{rem}

%%%% Section

\section{Concluding remarks and future problems}\label{section remark}

\paragraph{}
\ \,We conclude this paper by discussing two future problems.
\par
The first problem is solvability of flows of forward-backward SDEs on arbitrary time-intervals $[0,T]$. This is a difficult problem since even in the case of classical forward-backward SDEs Lipschitz continuity of the coefficients is insufficient for well-posedness of the equation defined on an arbitrary time-interval; see the textbook \cite{b_Ma-Yong_99}. In classical forward-backward SDE theory, the so-called four-step scheme introduced by Ma, Protter and Yong \cite{a_Ma-Protter-Yong_94} is a good method to treat the case where the time-interval is arbitrary. This method is to decouple the backward and forward components of the equation by the so-called decoupling field which turns out to be a classical solution of a quasilinear PDE. Since there are two time variables $(t,s)$ in the case of flows of forward-backward SDEs, we have to generalize the concept of decoupling fields in order to take these variables into account. Indeed, Wang~\cite{a_Wang_19} recently introduced the extended BSVIE of the following form:
\begin{equation}\label{EBSVIE}
	\begin{cases}
		X_s=x+\int^s_0B(r,X_r)\,dr+\int^s_0\Sigma(r,X_r)\,dW_r,\\
		Y^t_s=G(t,X_T)+\int^T_sF(t,r,X_r,Y^r_r,Y^t_r,Z^t_r)\,dr-\int^T_sZ^t_r\,dW_r,\ (t,s)\in\Delta[0,T],
	\end{cases}
\end{equation}
(with $n=d$,) and investigated the connection between the above equation and the non-local quasilinear PDE system:
\begin{equation*}
	\begin{cases}
		\theta_s(t,s,x)+\theta_x(t,s,x)B(s,x)+\frac{1}{2}\Sigma(s,x)^\top\theta_{xx}(t,s,x)\Sigma(s,x)\\
		\hspace{0.3cm}+F(t,s,x,\theta(s,s,x),\theta(t,s,x),\theta_x(t,s,x)\Sigma(s,x))=0,\ (t,s,x)\in\Delta[0,T]\times\mathbb{R}^d,\\
		\theta(t,T,x)=G(t,x),\ (t,x)\in[0,T]\times\mathbb{R}^d.
	\end{cases}
\end{equation*}
The equation (\ref{EBSVIE}) is nothing but the decoupled version of FFBSDE$(T)$ with the coefficients $B$ and $\Sigma$ being independent of $Y^s_s$ and $F$ and $G$ being independent of $(X_t,\mathbb{E}_t[X_s])$. From the above literature, we can predict that, in the case that $F$ and $G$ are independent of $(X_t,\mathbb{E}_t[X_s])$, the generalization of the four-step scheme to a (coupled) flow of forward-backward SDEs (\ref{flow of FBSDE}) will reduce to the investigation of the non-local PDE system of the following form:
\begin{equation*}
	\begin{cases}
		\theta_s(t,s,x)+\theta_x(t,s,x)B(s,x,\theta(s,s,x))+\frac{1}{2}\Sigma(s,x,\theta(s,s,x))^\top\theta_{xx}(t,s,x)\Sigma(s,x,\theta(s,s,x))\\
		\hspace{0.3cm}+F(t,s,x,\theta(s,s,x),\theta(t,s,x),\theta_x(t,s,x)\Sigma(s,x,\theta(s,s,x)))=0,\ (t,s,x)\in\Delta[0,T]\times\mathbb{R}^d,\\
		\theta(t,T,x)=G(t,x),\ (t,x)\in[0,T]\times\mathbb{R}^d.
	\end{cases}
\end{equation*}
We expect that this generalization of the four-step scheme helps us to solve FFBSDE$(T)$ globally. This is our future problem.
\par
The second problem is a generalization to more intricately coupled systems. In order to treat the time-inconsistent stochastic control problems where the volatility of the state process is also controlled, we should consider more general forms of forward-backward systems, namely, the following form of flows:
\begin{equation}\label{general flow of FBSDE}
\begin{cases}
	dX_s=B(s,X_s,Y^s_s,Z^s_s)\,ds+\Sigma(s,X_s,Y^s_s,Z^s_s)\,dW_s,\ s\in[0,T],\\
	dY^t_s=-F(t,s,X_t,\mathbb{E}_t[X_s],X_s,Y^s_s,Z^s_s,Y^t_s,Z^t_s)\,ds+Z^t_s\,dW_s,\ s\in[t,T],\\
	X_0=x,\ Y^t_T=G(t,X_t,\mathbb{E}_t[X_T],X_T),\ t\in[0,T].
\end{cases}
\end{equation}
Unfortunately, our arguments in this paper are insufficient to treat this generalized system. A difficulty comes from the lack of regularity of $Z$. Indeed, for each $t\in[0,T]$, the process $(Z^t_s)_{\in[t,T]}$ is  in $L^2_\mathbb{F}(t,T;\mathbb{R}^{m\times d})$, not in $L^2_\mathbb{F}(\Omega;C([t,T];\mathbb{R}^{m\times d}))$, in general. So even the well-definedness of the diagonal term $Z^s_s$ is not clear. In order to investigate the system (\ref{general flow of FBSDE}), we have to estimate the term $Z$ in more detail, which is yet to be investigated.

%%%%%%%%%%%%%%%%%%%%%%%%%%%%%%%%%%%%%%%%%%%%%%%%%%%%%%%%%%%%%%%%%%%%%%%%%%%%%%%%%%%%%%%%%%%%%%
%%%% Acknowledgments
%%%%%%%%%%%%%%%%%%%%%%%%%%%%%%%%%%%%%%%%%%%%%%%%%%%%%%%%%%%%%%%%%%%%%%%%%%%%%%%%%%%%%%%%%%%%%%

\section*{Acknowledgments}
\paragraph{}
\ \,I would like to thank Professor Masanori Hino, who is my supervisor, Professor Ichiro Shigekawa, and Professor Jun Sekine for helpful discussions.
\par
This work was supported by JSPS KAKENHI Grant Number JP18J20973.
\\
\par
Conflict of Interest: The author declares that he has no conflict of interest.

%%%%%%%%%%%%%%%%%%%%%%%%%%%%%%%%%%%%%%%%%%%%%%%%%%%%%%%%%%%%%%%%%%%%%%%%%%%%%%%%%%%%%%%%%%%%%%
%%%% References
%%%%%%%%%%%%%%%%%%%%%%%%%%%%%%%%%%%%%%%%%%%%%%%%%%%%%%%%%%%%%%%%%%%%%%%%%%%%%%%%%%%%%%%%%%%%%%

\bibliography{reference}

%%%%%%%%%%%%%%%%%%%%%%%%%%%%%%%%%%%%%%%%%%%%%%%%%%%%%%%%%%%%%%%%%%%%%%%%%%%%%%%%%%%%%%%%%%%%%%
%%%%%%%%%%%%%%%%%%%%%%%%%%%%%%%%%%%%%%%%%%%%%%%%%%%%%%%%%%%%%%%%%%%%%%%%%%%%%%%%%%%%%%%%%%%%%%
%%%%%%%%%%%%%%%%%%%%%%%%%%%%%%%%%%%%%%%%%%%%%%%%%%%%%%%%%%%%%%%%%%%%%%%%%%%%%%%%%%%%%%%%%%%%%%

\end{document}